\theoremstyle{plain}
    \newtheorem{thm}{Theorem}[section]
    \newtheorem{prop}[thm]{Proposition}
    \newtheorem{lemma}[thm]{Lemma}
    \newtheorem{subsec}[thm]{}
\theoremstyle{definition}
    \newtheorem{defn}[thm]{Definition}
    \newtheorem{example}[thm]{Example}
\theoremstyle{remark}
        \newtheorem{remark}[thm]{Remark}
	\newtheorem{ack}[thm]{Acknowledgements}
        \newtheorem{summary}[thm]{Summary}
\newenvironment{myeq}[1][]
{\stepcounter{thm}\begin{equation}\tag{\thethm}{#1}}
{\end{equation}}
\newcommand{\mydiagram}[2][]
{\stepcounter{thm}\begin{equation}
     \tag{\thethm}{#1}\vcenter{\xymatrix{#2}}\end{equation}}
\newenvironment{mysubsection}[2][]
{\begin{subsec}\begin{upshape}\begin{bfseries}{#2.}
\end{bfseries}{#1}}
{\end{upshape}\end{subsec}}
\newenvironment{mysubsect}[2][]
{\begin{subsec}\begin{upshape}\begin{bfseries}{{#2}\vsn.}
\end{bfseries}{#1}}
{\end{upshape}\end{subsec}}
\newcommand{\w}[2][ ]{\ \ensuremath{#2}{#1}\ }
\newcommand{\ww}[2][ ]{\ \ensuremath{#2}{#1}}
\newcommand{\wh}{\ -- \ }
\newcommand{\wb}[2][ ]{\ (\ensuremath{#2}){#1}\ }
\newcommand{\wwb}[1]{\ (\ensuremath{#1})-}
\newcommand{\wref}[2][ ]{\ \eqref{#2}{#1}\ }
\newcommand{\hsp}{\hspace{10 mm}}
\newcommand{\hsm}{\hspace{2 mm}}
\newcommand{\vsm}{\vspace{2 mm}}
\newcommand{\vsn}{\vspace{1 mm}}
\newcommand{\hra}{\hookrightarrow}
\newcommand{\rest}[1]{\lvert_{#1}}
\newcommand{\clos}{\operatorname{cl}}
\newcommand{\pclos}{\operatorname{pcl}}
\newcommand{\cspace}{configuration space}
\newcommand{\Conf}{\operatorname{Conf}}
\newcommand{\dd}{\operatorname{d}}
\newcommand{\ddp}{\dd\!}
\newcommand{\dhh}{\operatorname{dh}}
\newcommand{\Emb}[2]{\operatorname{Emb}^{#1}({#2})}
\newcommand{\Euc}[1]{\operatorname{Euc}^{#1}}
\newcommand{\Id}{\operatorname{Id}}
\newcommand{\Image}{\operatorname{Im}}
\newcommand{\Line}{\operatorname{Line}}
\newcommand{\open}{\operatorname{op}}
\newcommand{\rank}{\operatorname{Rank}}
\newcommand{\wspace}{work space}
\newcommand{\hphi}{\hat{\phi}}
\newcommand{\tpsi}{\tilde{\psi}}
\newcommand{\be}{\mathbf{e}}
\newcommand{\ve}{\vec{\be}}
\newcommand{\vel}{\vec{\ell}}
\newcommand{\bte}{\mathbf{t}}
\newcommand{\vt}{\vec{\bte}}
\newcommand{\bv}{\mathbf{v}}
\newcommand{\vv}{\vec{\bv}}
\newcommand{\bx}{\mathbf{x}}
\newcommand{\xe}{\bx_{e}}
\newcommand{\xs}{\bx_{\star}}
\newcommand{\bw}{\mathbf{w}}
\newcommand{\vw}{\vec{\bw}}
\newcommand{\by}{\mathbf{y}}
\newcommand{\bz}{\mathbf{z}}
\newcommand{\bze}{\mathbf{0}}
\newcommand{\vz}{\vec{\bze}}
\newcommand{\R}{{\mathbb R}}
\newcommand{\RR}[1]{\R^{#1}}
\newcommand{\C}{{\EuScript C}}
\newcommand{\Cs}{\C_{\ast}}
\newcommand{\hC}{\widehat{\C}}
\newcommand{\hCs}{\hC_{\ast}}
\newcommand{\hCps}{\hC'_{\ast}}
\newcommand{\CG}{\C(\Gamma)}
\newcommand{\CGp}{\C(\Gp)}
\newcommand{\CsG}{\Cs(\Gamma)}
\newcommand{\CsGp}{\Cs(\Gp)}
\newcommand{\FG}{F^{\Gamma}}
\newcommand{\T}{{\EuScript T}}
\newcommand{\TG}{\T_{\Gamma}}
\newcommand{\Vp}{\Vc\,'}
\newcommand{\Vpp}{\Vc\,''}
\newcommand{\hV}{\hat{\Vc}}
\newcommand{\hVp}{\hV'}
\newcommand{\Vk}{\Vj{k}}
\newcommand{\Vn}{\Vj{n}}
\newcommand{\hVnp}{\Vpj{n+1}}
\newcommand{\Pc}{{\mathcal P}}
\newcommand{\Vc}{{\mathcal V}}
\newcommand{\Wc}{{\mathcal W}}
\newcommand{\Gp}{\Gamma'}
\newcommand{\tGp}{\tilde{\Gamma}'}
\newcommand{\Gpp}{\Gamma''}
\newcommand{\Gnl}[1]{\Gamma_{\open}^{#1}}
\newcommand{\Gkl}{\Gnl{k}}
\newcommand{\Gkprc}[1]{\Gamma_{\pclos}^{#1}}
\newcommand{\Gkc}[1]{\Gamma_{\clos}^{#1}}
\newcommand{\Gkpc}{\Gkc{k+1}}
\newcommand{\up}[1]{\sp{({#1})}}
\newcommand{\lj}[1]{\ell\up{#1}}
\newcommand{\nj}[1]{n\up{#1}}
\newcommand{\Vj}[1]{\Vc\up{#1}}
\newcommand{\Vpj}[1]{\hat{\Vc}\up{#1}}
\newcommand{\vj}[1]{\bv\up{#1}}
\newcommand{\xj}[1]{\bx\up{#1}}
\newcommand{\xjk}[2]{\bx\up{#1}\sb{#2}}
\begin{document}
\title{Generic singular configurations of linkages}
\author{David Blanc}
\address{Dept.\ of Mathematics, U. Haifa, 31905 Haifa, Israel}
\email{blanc@math.haifa.ac.il}
\author{Nir Shvalb}
\address{Dept.\ of Industrial Engineering, Ariel Univ.\ Center, 47000,
Ariel, Israel}
\email{nirsh@ariel.ac.il}
\date{\today}
\subjclass{Primary 70G40; Secondary 57R45, 70B15}
\keywords{\cspace, workspace, robotics, mechanism, linkage, kinematic
  singularity, topological singularity}
\begin{abstract}
We study the topological and differentiable singularities of the
\cspace\ \w{\CG} of a mechanical linkage $\Gamma$ in \w[,]{\RR{d}}
defining an inductive sufficient condition to determine when a
configuration is singular. We show that this condition holds for
generic singularities, provide a mechanical interpretation, and
give an example of a type of mechanism for which this criterion
identifies all singularities.
\end{abstract}
\maketitle

%
%
\section{Introduction}
\label{cint}
The mathematical theory of robotics is based on the notion of a
mechanism consisting of links, joints, and rigid platforms.
The \emph{mechanism type} is a simplicial (or polyhedral) complex
\w[,]{\TG} where the parts of dimension $\geq 2$ \ correspond to the
platforms, and the complementary one-dimensional graph corresponds to
the links (=edges) and joints (=vertices).  The \emph{linkage} (or
mechanism) $\Gamma$ itself is determined by assigning fixed lengths to
each of the links of \w[.]{\TG} See \cite{MerlP,SeliG,TsaiR} and
\cite{FarbT} for surveys of the mechanical and topological aspects,
respectively. 

\begin{mysubsection}{Configuration spaces}\label{scspace}
Here we concentrate on the most prevalent type of mechanism
\w[:]{\TG} namely, a finite $1$-dimensional simplicial
complex (undirected graph), with $N$ vertices and $k$ edges. 
Note that a rigid platform is completely specified by listing the
lengths of all its diagonals (i.e., the distance between any two 
vertices), so we need not list the platforms explicitly. Our results
actually hold also for the case when some links of $\Gamma$ are
\emph{prismatic} (or telescopic) \wh i.e., have variable length \wh
but for simplicity we deal here with the fixed-length case only.

A length-preserving embedding of the vertices of the linkage $\Gamma$
in a fixed ambient Euclidean space \w{\RR{d}} is called a
\emph{configuration} of $\Gamma$. In applications, $d$ is most
commonly $2$ or $3$. The set of all such embeddings, with the natural
topology (and differentiable structure), is called the \cspace\ of
$\Gamma$, denoted by \w[.]{\CG} Such \cspace s have been studied
intensively, with the hope of extracting useful mechanical information
from their topological or geometric properties. Much of the
mathematical literature has been devoted to the special case when
$\Gamma$ is a closed chain (polygon): see, e.g., 
\cite{FTYuzvT,HausT,HKnutC,KMillM,KMillS,MTrinG}. However, the
general case has also been treated (cf.\
\cite{HolcM,KamiHA,KTsuC,KMillU,OHaraM,SSBlaCA,SSBlaCP}).
\end{mysubsection}

\begin{mysubsection}{Singularities}\label{ssing}
There are two main types of singularities which arise in robotics.
The \emph{kinematic} singularities of a mechanism, which appear as
singularities of work and actuation maps defined on \w{\CG} (\S
\ref{dwork}), have obvious mechanical interpretations, and have been
studied intensively (see, e.g., \cite{GAngS}, \cite[\S 6.2]{MerlP},
and \cite{ZFBenS}). On the other hand, the \emph{topological} or
differentiable singularities of the \cspace\ \w{\CG} itself have not
received much attention in the literature since  \cite{HuntK},
aside from some special examples (see, e.g., \cite{FarbT,KMillS} and
\cite{ZBGossC}). 

For any linkage $\Gamma$, the \cspace\ \w{\CG} is the zero set of a
smooth function \w{\lambda:\RR{Nd}\to\RR{k}} (see \S \ref{dcspace} below),
so that \w{\CG} is typically a smooth manifold (when \w{\vz\in\RR{d}}
is a regular value of $\lambda$), and even if not, ``most'' points of
\w{\CG} are smooth, since a simple \emph{necessary} condition for a
point $\Vc$ in \w{\CG} to be singular is that
\w[.]{\rank(\ddp\lambda_{\Vc})<k} Thus we are in the common situation
where it is relatively straightforward to identify configurations
which are \emph{possibly} singular, but not so easy to pinpoint when
this is in fact so\vsm . 

Our goal in this paper is threefold:

\begin{enumerate}\renewcommand{\labelenumi}{(\alph{enumi})}
\item To provide a straightforward inductive description of a
  \emph{sufficient} condition for a configuration $\Vc$ to be
  differentiably singular (in fact, this will imply that $\Vc$ is even
  a topological singularity) \wh see Proposition \ref{pgensing} and
  Theorem \ref{tgensing}.
\item To show that this condition applies generically (that is, to all
  but a positive-codimension subset of the singular locus $\Sigma$)
  \wh see Remarks \ref{rgensin} and \ref{rgensnt}.
\item To obtain a mechanical interpretation for all singularities
in the \cspace\ of a linkage $\Gamma$ as a tangential
conjunction of two kinematic singularities of type I (cf.\
\cite{GAngS}) for complementary sub-mechanisms of $\Gamma$ \wh see
Remark \ref{rmechanism}.
\end{enumerate}

The third goal is completely achieved only in the plane (for
\w[),]{d=2} since the model we use for \cspace s is not completely
realistic for rigid rods in \w[.]{\RR{3}} See Remark \ref{rccs} below
for an explanation of the difficulties involved.
\end{mysubsection}

\begin{remark}\label{ralgvar}
Since the function \w{f:\RR{Nd}\to\RR{k}} defining the \cspace\ is
a quadratic polynomial (cf.\ \S \ref{dcspace}), \w{\CG} is actually
a real algebraic variety. Thus any topological or differentiable
singularity $\Vc$ is in particular an algebraic singularity (cf.\
\cite[Ch.\ II, \S 1.4]{ShafB1}). Somewhat more surprisingly, every
real algebraic variety is a union of components of the \cspace\
of some planar linkage \wb{d=2} \wh see \cite{KMillU,KingP,JSteiC}. 
Thus our results here appear to be statements about any real algebraic
variety. 

However, the point we wish to make here is not that the cone
singularities are the most common ones in algebraic varieties; it
is rather the mechanical interpretation of the generic
singularities, and the mechanical underpinnings of the inductive
process described in Section \ref{ciccspaces}.

In fact, while the topological, differentiable, and geometric structures
on \cspace s of linkages can be used to study their mechanics (cf.\
\cite{KMillS,KTezuS}), the algebraic structure usually plays no role (but
see \cite{CapoG}).
\end{remark}

\begin{mysubsection}[\label{sorg}]{Organization}
In Section \ref{cbcspaces} we briefly review some of the basic
notions used in this paper. In Section \ref{clecspaces}, various
concepts of local equivalences of \cspace s are defined; these
help to simplify the study of singular points. In Section
\ref{cpbc} we explain the role played by pullbacks of \cspace s. This
is applied in Section \ref{ciccspaces} to provide an inductive
construction, which is used both to describe the sufficient condition
mentioned in \S \ref{ssing}(b), and to show that they are indeed
singular points. An example is studied in detail in Section \ref{ctriang}. 
\end{mysubsection}

\begin{ack}
We wish to thank the referee for his or her comments.
\end{ack}

%
%
\section{Background on configuration spaces}
\label{cbcspaces}

We first recall some general background material on the
construction and basic properties of \cspace s. This also serves
to fix notation, which is not always consistent in the literature.

\begin{defn}\label{dcspace}
Consider an abstract graph \w{\TG} with vertices $V$ and edges
\w[.]{E\subseteq V^{2}} A \emph{linkage} (or \emph{mechanism})
$\Gamma$ of type \w{\TG} is determined by a function \w{\ell:E\to\RR{}_{+}}
specifying the length \w{\ell_{i}} of each edge \w{e_{i}} 
in \w{E=\{e_{i}=(u_{i},v_{i})\}_{i=1}^{k}} (subject to the triangle
inequality as needed).  We write
\w{\vel^{2}:=(\ell_{1}^{2},\dotsc,\ell_{k}^{2})\in\RR{E}} 
for the vector of squared lengths.

The set of all embeddings of $V$ in an ambient Euclidean space
\w{\RR{d}} is an open metric subspace of \w[,]{(\RR{d})^{V}} denoted
by \w[.]{\Emb{d}{\TG}} We have a \emph{squared length map}
\w{\lambda:\Emb{d}{\TG}\to\RR{E}} with 
\w[,]{\lambda(u_{i},v_{i}):=\|\varphi(u_{i})-\varphi(v_{i})\|^{2}} and 
the \emph{\cspace} of the linkage \w{\Gamma=(\TG,\ell)} is the
metric subspace \w{\CG:=\lambda^{-1}(\vel^{2})} of \w[.]{\Emb{d}{\TG}}
A point \w{\Vc\in\CG} is called a \emph{configuration} of $\Gamma$. 
Note that $\lambda$ is an algebraic function of \w{\Vc\in\RR{dN}}
(which is why the lengths were squared), so \w{\CG} is a real
algebraic variety. 
\end{defn}

\begin{remark}\label{rsubmer}
By \cite[I, Theorem 3.2]{HirsD}, we know that \w{\CG} is a smooth
manifold if \w{\vel^{2}} is a regular value of $\lambda$: that is, if its
differential \w{\ddp\lambda_{\Vc}} is of maximal rank for every
\w{\Vc\in\Emb{d}{\TG}} with \w[.]{\lambda(\Vc)=\vel^{2}} 

However, for some mechanism types \w[,]{\TG} this condition may not be
generic: there exist  mechanism types \w{\TG} and an open set $U$ in
\w{\RR{dN}} consisting of non-regular values of \w[.]{\FG} This means
that for each \w[,]{\vel^{2}_{0}\in U} the \cspace\
\w{\C(\Gamma_{\vel^{2}_{0}}):=\lambda^{-1}(\vel^{2}_{0})} has at least 
one configuration \w{\Vc\in\C(\Gamma_{\vel^{2}_{0}})} such that
$\lambda$ not a submersion at $\Vc$. See \cite{SSBlaCP} for an example.
\end{remark}

\begin{mysubsection}{Isometries of \cspace s}
\label{srestr}
The group \w{\Euc{d}} of isometries of the Euclidean space
\w{\RR{d}} acts on the space \w[.]{\CG} When $\Gamma$ has a rigid
``base platform'' $P$ of dimension $\geq d-1$, \  
this action is free. In this case we can work with the ``restricted
\cspace'' \w[,]{\CG/\Euc{d}} and the quotient map has a continuous
section (equivalent to choosing a fixed location in \w{\RR{d}} for
$P$). See \S \ref{sppm} for an example of such a $\Gamma$.

In general, certain configurations (e.g., those contained in a proper
linear subspace $W$ of \w[)]{\RR{d}} may be fixed by certain
transformations (those fixing $W$), so the action of \w{\Euc{d}} is not free.
\end{mysubsection}

\begin{defn}\label{dfixlink}
Choose a fixed vertex \w{\xs} of $\Gamma$ as its \emph{base-point}:
the action of the translation subgroup \w{T\cong\RR{d}} of \w{\Euc{d}}
on \w{\xs} \emph{is} free, so its action on \w{\CG} is free, too,
and we call the quotient space \w{\CsG:=\CG/T} the \emph{pointed \cspace} 
for $\Gamma$. Thus \w[,]{\CG\cong\CsG\times\RR{d}} and a pointed
configuration (i.e., an element of \w[)]{\CsG} is simply an ordinary
configuration expressed in terms of a coordinate frame for \w{\RR{d}}
with the origin at \w[.]{\xs}  

If we also choose a fixed link $\vv$ in $\Gamma$ starting at \w[,]{\xs} 
we obtain a smooth map \w{p:\CsG\to S^{d-1}} which assigns to a
configuration $\Vc$ the direction of $\vv$. The fiber \w{\hCs(\Gamma)}
of $p$ at \w{\ve_{1}\in S^{d-1}} will be called the \emph{reduced \cspace} 
of $\Gamma$. Note that the bundle \w{\CsG\to S^{d-1}} is locally
trivial. 
\end{defn}

\begin{defn}\label{dwork}
A mechanism $\Gamma$ may be equipped with a special point \w{\xe} \wh in
engineering terms this is the ``end-effector'' of $\Gamma$, whose
manipulation is the goal of the mechanism. We think of
\w{\Delta:=\{\xe\}} as a sub-mechanism of $\Gamma$ (more generally, we
could choose any rigid sub-mechanism).
Assuming that the base-point \w{\xs} of $\Gamma$ is not \w[,]{\xe}
the inclusion \w{j:\Delta\hra\Gamma} induces a map of \cspace s
\w[,]{j^{\ast}:\CsG\to\C(\Delta)} whose image $\Wc$ is  
called the \emph{work space} of the mechanism. The \emph{work map}
\w{\psi:\CsG\to\Wc} of $\Gamma$ is the factorization of
\w{j^{\ast}} through $\Wc$ (which is not always a smooth manifold).  
\end{defn}

\begin{example}\label{eglwork}
Now consider a closed $5$-chain  \w[,]{\Gkc{5}} as in Figure
\ref{ffivebar}, with end-effector \w[.]{\xe=\xj{2}} Here 
the direction of \w{\vv:=\xj{4}-\xj{0}} is fixed.

\begin{figure}[htb]
\epsfysize=3.5cm \leavevmode \epsffile{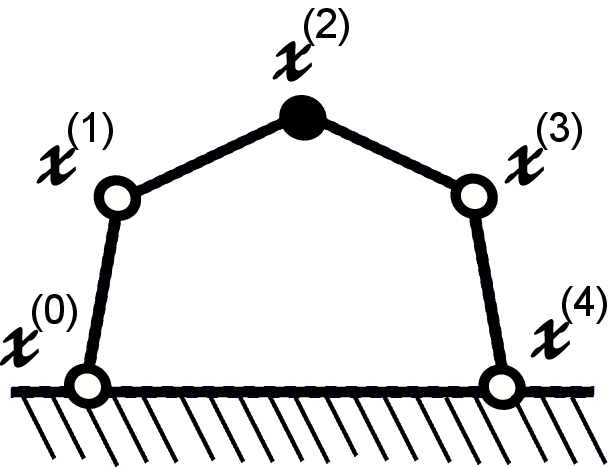}
\caption{Closed $5$-chain \ $\Gkc{5}$} \label{ffivebar}
\end{figure}

 The work space of each of the two open sub-chains of \w{\Gkc{5}}
 starting at \w{\xj{0}} and ending at \w{\xj{2}} is a closed annulus.
 Therefore, $\Wc$ is the intersection of these two annuli (see Figure
\ref{fworkfive}), i.e. a curvilinear polygon in \w[,]{\RR{2}}
whose combinatorial type depends on the lengths of the links.
\end{example}

\begin{figure}[htb]
\epsfysize=3.5cm
\leavevmode \epsffile{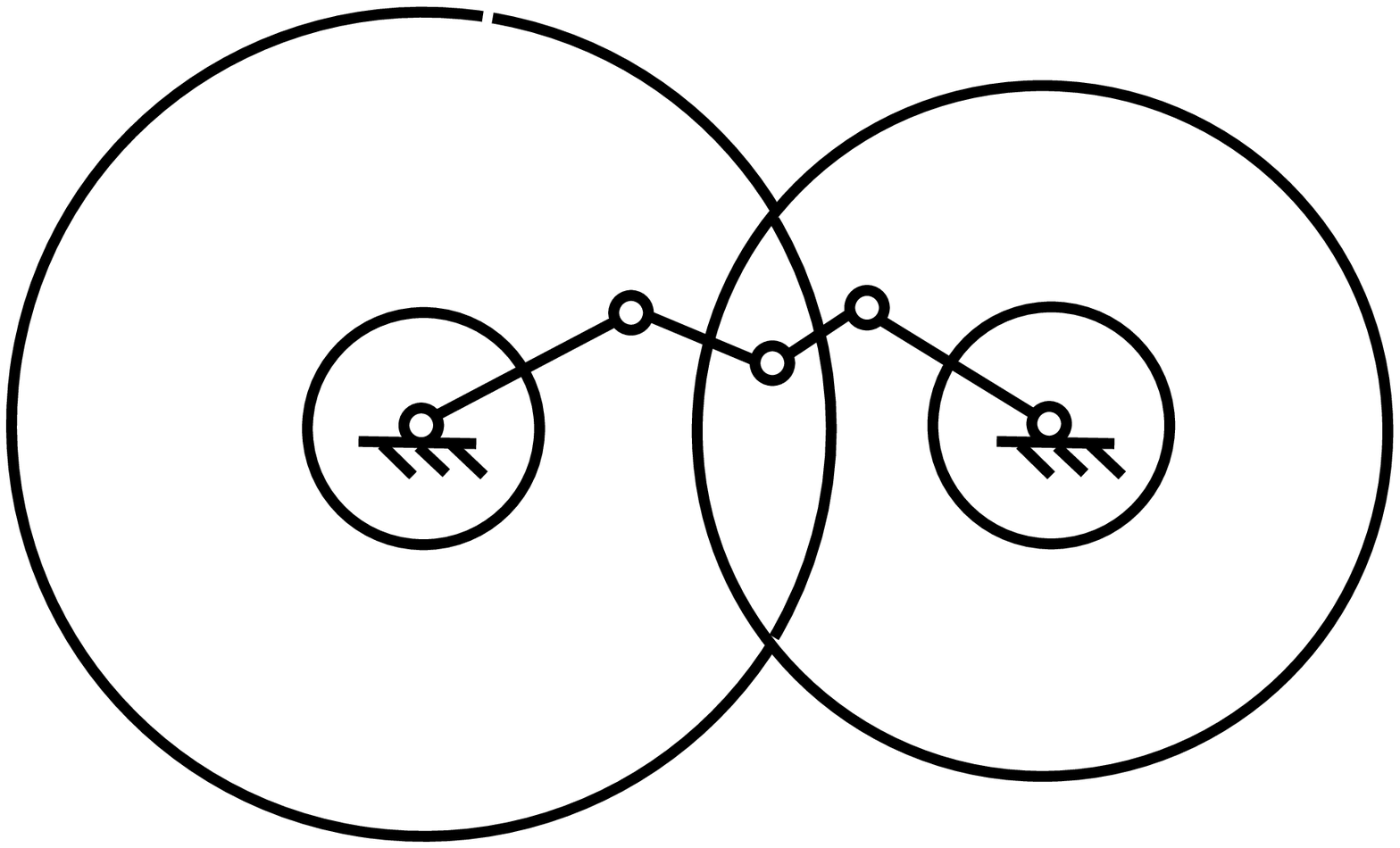}
\caption{The lens-shaped work space $\Wc$ for \ $\Gkc{5}$}
\label{fworkfive}
\end{figure}

\begin{remark}\label{rccs}
The \cspace s studied in this paper are mathematical models, which take
into account only the locations of the vertices of $\Gamma$,
disregarding possible  intersections of the edges. In the plane, there
is some justification for this, since we can allow one link to slide
over another. This is why this model is commonly used
(cf. \cite{FarbT,KMillM}; but see \cite{CDRoteS}). However, in
\w{\RR{3}} the model is not very realistic, since it disregards the
fact that rigid rods cannot pass through each other.

Thus a proper treatment of configurations in \w{\RR{3}} must cut our
``naive'' version of \w{\Emb{d}{\TG}} (and thus \w{\CG} and
\w[)]{\CsG} along the subspace of configurations which are not
embeddings of the full graph \w[.]{\TG} The precise description of
such a ``realistic'' \cspace\ \w{\Conf(\TG)} is quite complicated,
even at the combinatorial level, which is why we work here
with \w[,]{\Emb{d}{\TG}} \w[,]{\CG} and \w{\CsG} as defined in \S
\ref{dcspace}-\ref{dfixlink}.  Note, however, that \w{\CG} has a dense open 
subspace \w{U(\Gamma)} consisting of embeddings of the full graph
(including its edges), which may be identified with a dense open subset
of \w[.]{\Conf(\TG)} We observe that even such a model \w{\Conf(\TG)}
is not completely realistic, in that it disregards the thickness of
the rigid rods.

Unfortunately, the generic singularities we identify here are not in 
\w[.]{U(\Gamma)} Nevertheless, in some cases at least, our method of
replacing one singular configuration by another (see Section
\ref{clecspaces} below) allows us to replace the generic 
singularity in \w{\CG\setminus U(\Gamma)} with a configuration in
\w[,]{U(\Gamma')} for a suitable linkage \w[.]{\Gamma'} See Section
\ref{ctriang} for an example of this phenomenon (which also occurs in
the $3$-dimensional version of the linkage described there).
\end{remark}

%
%
\section{Local equivalences of \cspace s}
\label{clecspaces}

Let $\Gamma$ and \w{\Gp} be two linkages.  We would like to think
of points in the respective \cspace s as being equivalent if they
are both smooth, or both have ``similar'' singularities. Since these
concepts are local, we make the following: 

\begin{defn}\label{dloce}
Two configurations $\Vc$ in \w{\CG} and \w{\Vp} in \w{\CGp}
are:

\begin{enumerate}\renewcommand{\labelenumi}{(\alph{enumi})}
\item \emph{locally equivalent} if there are neighborhoods
$U$ of $\Vc$ in \w{\CsG}  and \w{U'} of \w{\Vp} in
\w[,]{\Cs(\Gp)}  and a homeomorphism \w{f:U\to U'} with
\w[.]{f(\Vc)=\Vp}
\item \emph{locally product-equivalent} if there are neighborhoods
$W$ of $\Vc$ in \w{\CsG}  and \w{W'} of \w{\Vp} in
\w{\Cs(\Gp)} equipped with homeomorphisms \w{W\cong U\times\RR{k}}
(taking $\Vc$ to \w[)]{(\Vc_{0},\bx)} and \w{W'\cong U'\times\RR{m}}
(taking \w{\Vp} to \w[),]{(\Vp_{0},\by)} as well as a homeomorphism
\w{f:U\to U'} with \w[.]{f(\Vc_{0})=\Vp_{0}}
\end{enumerate}
See \cite{KMillU} for other formulations of this and similar notions.
\end{defn}

Evidently, any two smooth configurations in any two \cspace s are
locally product-equivalent\vsm.

In the next section we decompose our \cspace s into simpler factors
(locally), gluing them along appropriate work maps. The singularities
of the \cspace s translate into work singularities on the factors, so
we need an analogous notion of work maps being locally equivalent (at
smooth configurations), or locally equivalent up to a Euclidean
factor:

\begin{defn}\label{dlocew}
If \w{i:\Delta\hra\Gamma} and \w{i':\Delta\hra\Gamma'} are inclusions
of a common rigid sub-mechanism $\Delta$ (usually a single point) in two
distinct linkages, and \w[,]{\Vc\in\CsG} \w{\Vp\in\Cs(\Gp)} are two
smooth configurations, we say that \w{i^{\ast}} and \w{(i')^{\ast}} are 

\begin{enumerate}\renewcommand{\labelenumi}{(\alph{enumi})}
\item \emph{work-equivalent} at \w{(\Vc,\Vp)} if there are
  neighborhoods $U$ of $\Vc$, \w{U'} of \w[,]{\Vp} and $W$ of
  \w[,]{i^{\ast}(\Vc)=(i')^{\ast}(\Vp)} and a diffeomorphism $f$
  making the following diagram commute: 
\mydiagram[\label{eqlocew}]{
 & \ar@_{(->}[dl] \hspace*{1mm} U
\ar[dddr]_{i^{\ast}\rest{U}}\ar[rr]^{f}_{\cong} &&
U'\hspace*{1mm}\ar@^{(->}[rd] \ar[dddl]^{(i')^{\ast}\rest{U'}} & \\
\CsG \ar[d]_{i^{\ast}} & && & \Cs(\Gp) \ar[d]^{(i')^{\ast}} \\
\Cs(\Delta) & && & \Cs(\Delta') \\
 && \ar@_{(->}[llu]\hspace*{1mm} W \hspace*{1mm}\ar@^{(->}[rru] &&
}
\item \emph{$S$-equivalent} at \w{(\Vc,\Vp)} if there are
neighborhoods \w{W\cong U\times\RR{k}} of $\Vc$ and \w{W'\cong
U'\times\RR{m}} of \w{\Vp} and a homeomorphism \w{f:U\to U'} as in
\S \ref{dloce}(b) above, such \w{i^{\ast}} factors through the
projection \w{\pi:W\to U} and \w{(i')^{\ast}} factors through
\w{\pi':W'\to U'} in such a way that the diagram analogous to
\wref{eqlocew} commutes.
\end{enumerate}
\end{defn}

An important example of these notions is provided by the following
simple mechanism:

\begin{defn}\label{dopen}
An \emph{open $k$-chain} is a linkage \w[,]{\Gkl}
where \w{\TG} is a connected linear graph with \w{k+1} vertices (where
all but the endpoints \w{\xj{0}} and \w{\xj{k}} are of valency $2$),
with lengths  \w[.]{(\ell_{1},\dotsc,\ell_{k})} See Figure \ref{fopenchain}
below. It is natural to choose the base-point \w{\xs:=\xj{0}} (fixed at the
origin, say) to define the pointed \cspace\ \w[,]{\Cs(\Gkl)} and
\w{\xe:=\xj{k}} as end-effector.  

The resulting workspace $\Wc$ is \w[,]{S^{d-1}\times[m,M]} for fixed 
\w[,]{0<m<M} where \w{m=\min\{|\sum_{i=1}^{k}\pm\ell_{i}|\}} and
\w{M=\sum_{i=1}^{k}\ell_{i}} are respectively the minimal and
maximal possible distances of \w{\xe} from \w[.]{\xs} The spherical
(or polar) coordinate \w{\theta\in S^{d-1}} is the direction of the
vector \w[.]{\vv=\xe-\xs} 

A \emph{closed \wwb{k+1}chain} is a linkage \w[,]{\Gkc{k+1}}
where \w{\TG} is a cycle with \w{k+1} vertices (of valency $2$), having
lengths  \w{\ell_{1}=|\xj{1}-\xj{0}|,\ell_{2}=|\xj{2}-\xj{1}|,\dotsc,
\ell_{k+1}=|\xj{0}-\xj{k}|}  (see Figure \ref{ffivebar}). 

A \emph{prismatic closed \wwb{k+1}chain} \w{\Gkprc{k+1}} has the same 
\w[,]{\TG} with lengths \w{(\ell_{1},\dotsc,\ell_{k})} as for \w[,]{\Gkc{k+1}}
but with the last link prismatic \wh that is, the length 
\w{\ell=|\xj{0}-\xj{k}|} varies in the range \w[.]{m\leq\ell\leq M} 
\end{defn}

\begin{lemma}[\protect\cite{GottT}]\label{lwork}
The work map $\psi$ of an open chain is a submersion, unless \w{\Vc} is
aligned (that is, all links have a common direction vector $\vw$ in 
\w{\RR{d}} at $\Vc$). In this case the \ $(d-1)$-dimensional subspace
\w{\Image(\ddp\psi)_{\Vc}} is orthogonal to $\vw$.
\end{lemma}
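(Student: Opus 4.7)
The plan is to parametrize $\Cs(\Gkl)$ explicitly by the link directions, express the work map in those coordinates, and then reduce the claim to a short linear-algebra statement about a sum of hyperplanes in $\RR{d}$.

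First I would exhibit the diffeomorphism $\Phi : (S^{d-1})^{k} \to \Cs(\Gkl)$ sending a $k$-tuple $(\uj{1},\dotsc,\uj{k})$ of unit vectors to the pointed configuration with $\xj{0}=\vz$ and $\xj{i}=\sum_{j=1}^{i}\ell_{j}\,\uj{j}$; this uses only that the base-point $\xs=\xj{0}$ is pinned at the origin and that each subsequent vertex $\xj{i}$ is determined by $\xj{i-1}$ together with the unit direction $\uj{i}$. Under $\Phi$, the tangent space to $\Cs(\Gkl)$ at $\Vc=\Phi(\uj{1},\dotsc,\uj{k})$ becomes $\bigoplus_{j=1}^{k}(\uj{j})^{\perp}$, while the work map $\psi$ (picking off $\xj{k}=\xe$) becomes the explicit vector-valued map
\[
\psi(\uj{1},\dotsc,\uj{k}) \;=\; \sum_{j=1}^{k}\ell_{j}\,\uj{j}.
\]

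Next, a tangent vector $(\vj{1},\dotsc,\vj{k})$ with $\vj{j}\perp\uj{j}$ is sent by $\ddp\psi_{\Vc}$ to $\sum_{j}\ell_{j}\vj{j}$, so
\[
\Image(\ddp\psi_{\Vc}) \;=\; \sum_{j=1}^{k}\ell_{j}\,(\uj{j})^{\perp} \;=\; \sum_{j=1}^{k}(\uj{j})^{\perp}
\]
as subspaces of $\RR{d}$ (the positive scalars $\ell_{j}$ play no role). By orthogonal duality, this sum equals $\RR{d}$ if and only if $\bigcap_{j=1}^{k}\spn(\uj{j})=\{\vz\}$. If even two of the $\uj{j}$ are linearly independent, that intersection is trivial and $\psi$ is a submersion at $\Vc$. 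If instead all of the $\uj{j}$ are parallel to a common unit vector $\vw$ --- i.e.\ $\Vc$ is aligned --- then the intersection equals $\spn(\vw)$, so $\Image(\ddp\psi_{\Vc})=\vw^{\perp}$, the $(d-1)$-dimensional hyperplane orthogonal to $\vw$, as claimed.

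There is no real obstacle here: the lemma is essentially a direct calculation, and the only step carrying any content is the identification of $\Cs(\Gkl)$ with $(S^{d-1})^{k}$, after which both the submersion criterion and the description of $\Image(\ddp\psi_{\Vc})$ in the aligned case fall out of the elementary orthogonality argument above.
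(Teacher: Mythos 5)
Your proof is correct. The paper offers no proof of its own here --- the lemma is simply quoted from Gottlieb \cite{GottT} --- and your computation (identify \w{\Cs(\Gkl)} with \w{(S^{d-1})^{k}} as the paper itself does in \S \ref{spb}, write \w[,]{\psi=\sum_{j}\ell_{j}\uj{j}} and observe that \w{\Image(\ddp\psi)_{\Vc}=\sum_{j}(\uj{j})^{\perp}} is all of \w{\RR{d}} unless the \w{\uj{j}} span a common line, in which case it equals \w[)]{\vw^{\perp}} is exactly the standard argument behind that citation, handling the aligned case correctly even when some links point backwards, since \w{\spn(\uj{j})=\spn(\vw)} either way.
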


Clearly the \cspace s of an open $k$-chain and the corresponding 
prismatic closed \wwb{k+1}chain are isomorphic. However, 
the following result will be useful in understanding the work map
singularities of an open chain, by allowing us to disregard its
\wwb{d-1}dimensional non-singular direction.

\begin{prop}\label{pzero}
If \w{\Gkl} is an open $k$-chain with links 
\w[,]{(\ell_{1},\dotsc,\ell_{k})} then the \emph{pointed} \cspace 
\w{\Cs(\Gkl)} is $S$-equivalent at any configuration $\Vc$ to the \emph{reduced}
\cspace\ \w{\hCs(\Gkprc{k+1})} of a closed prismatic \wwb{k+1}chain.
\end{prop}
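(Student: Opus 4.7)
My plan is to exhibit $\Cs(\Gkl)$ locally as a product $\hCs(\Gkprc{k+1}) \times \RR{d-1}$, with the Euclidean factor arising from the direction of a chosen reference link in $\Gkprc{k+1}$.

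First I would establish a canonical homeomorphism $\Phi \colon \Cs(\Gkprc{k+1}) \xrightarrow{\cong} \Cs(\Gkl)$. By Definition \ref{dopen}, the prismatic link of $\Gkprc{k+1}$ joining $\xj{k}$ back to $\xj{0}$ has variable length in $[m,M]$, which is exactly the range of $|\xj{k}-\xj{0}|$ realizable by the open $k$-chain. Hence forgetting this prismatic link is a bijection on the level of configurations, and it is clearly a homeomorphism with respect to the natural topologies.

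Next I would apply Definition \ref{dfixlink} to $\Gkprc{k+1}$, taking the reference link $\vv$ to be the first (non-prismatic) link $\xj{1}-\xj{0}$, which has fixed positive length $\ell_1$. This yields a smooth locally trivial fibration $p \colon \Cs(\Gkprc{k+1}) \to S^{d-1}$ with fiber $\hCs(\Gkprc{k+1})$ over $\ve_1$. Given $\Vc \in \Cs(\Gkl)$, after applying a suitable rotation $R \in \SO{d}$ of the ambient $\RR{d}$, which induces a self-homeomorphism of $\Cs(\Gkl)$ preserving the local structure, I may assume that $p(\Phi^{-1}(\Vc)) = \ve_1$, so that $\Vp := \Phi^{-1}(\Vc)$ lies in $\hCs(\Gkprc{k+1})$.

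Finally, a local trivialization of $p$ over a chart $V \subset S^{d-1}$ about $\ve_1$, together with a diffeomorphism $V \cong \RR{d-1}$, produces a neighborhood $W \cong U \times \RR{d-1}$ of $\Vp$ in $\Cs(\Gkprc{k+1})$, where $U$ is a neighborhood of $\Vp$ in $\hCs(\Gkprc{k+1})$. Transporting across $\Phi$ yields the desired product decomposition of a neighborhood of $\Vc$ in $\Cs(\Gkl)$, while the corresponding neighborhood $W' = U$ in $\hCs(\Gkprc{k+1})$ is trivially $U \times \RR{0}$. With $f = \Id_U$, this furnishes the $S$-equivalence (with the two Euclidean dimensions being $d-1$ and $0$, in the notation of Definition \ref{dloce}(b)). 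There is no substantial obstacle: the one point requiring care is the canonical identification $\Phi$, which relies on the matching of $m, M$ with the open chain's reach, after which the argument is a direct application of local triviality.
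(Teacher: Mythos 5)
Your first step --- the canonical homeomorphism $\Phi$ obtained by forgetting the prismatic link, justified by matching $[m,M]$ with the reach of the open chain --- is fine; it is exactly the remark made just before the proposition. The gap lies in what follows: what you construct is only a local \emph{product}-equivalence in the sense of Definition \ref{dloce}(b). You never mention the work maps, and $S$-equivalence (Definition \ref{dlocew}(b)) is a statement about work maps: the equivalence must be compatible with $i^{\ast}$ and $(i')^{\ast}$ for the common sub-mechanism $\Delta=\{\xj{k}\}$, with $i^{\ast}$ factoring through the projection $\pi:W\to U$. This compatibility is the entire point of the proposition: it is what later (in the proof of Proposition \ref{pgensin}) converts the work map of the open chain, whose target is $d$-dimensional, into the \emph{reduced} work map $\hphi$ of $\Gkprc{k+1}$ --- the length of the variable link, a scalar Morse function to which the analysis of \cite{MTrinG} applies.

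The reason your decomposition cannot be repaired as stated is the choice of reference link. Splitting off the direction of the first link $\xj{1}-\xj{0}$ does give a product neighborhood $W\cong U\times\RR{d-1}$, but with respect to that splitting the work map $i^{\ast}(\Vc)=\xj{k}-\xj{0}$ does \emph{not} factor through the projection to $U$ (rotating the first link while holding the relative angles fixed moves $\xj{k}$), and its restriction to $U$ still has $d$-dimensional image rather than being a length function. The paper instead takes the split-off $S^{d-1}$ to be the direction $\theta$ of the vector $\vv=\xj{k}-\xj{0}$ itself, i.e.\ of the prismatic link: in spherical coordinates the work map then becomes $(\pi_{\theta},\rho)$ with $\rho=\|\xj{k}-\xj{0}\|$ depending only on the relative angles, so the work map is the identity on the Euclidean factor times the reduced work map $\rho=\hphi$ on $U$, and the fibers of $\rho$ assemble, as $\ell$ varies over $[m,M]$, into precisely $\hCs(\Gkprc{k+1})$. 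Your choice does have the advantage of avoiding the degenerate cases $\vv=\vz$ (which the paper must treat separately at the end of its proof), but only because it discards exactly the structure the proposition is required to preserve.
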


\begin{figure}[htb]
\epsfysize=3.5cm
\leavevmode \epsffile{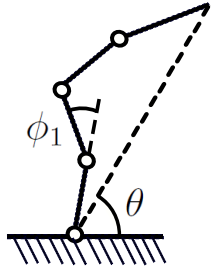}
\caption{Coordinates for the  open chain}
\label{fopenchain}
\end{figure}

\begin{proof}
We may choose \w{(\theta,\phi_{1},\dotsc,\phi_{k-1})\in(S^{d-1})^{k}} as
local coordinates for the smooth \cspace\ \w{\Cs(\Gkl)} near $\Vc$, where
\w{\phi_{i}} is the spherical angle between the vectors
\w{\xj{i-1}\xj{i}} and \w{\xj{i}\xj{i+1}} (see Figure
\ref{fopenchain}), and $\theta$ is as in \S \ref{dopen} 
(for \w[).]{\vv\neq\vz} 

Thus in a coordinate neighborhood 
\w{U\cong\RR{k(d-1)}} of $\Vc$ the work map
\w{i^{\ast}:U\to\RR{d-1}\times[m,M]} factors as
\w[,]{(\pi_{\theta},\rho)} where
\w{\pi_{\theta}(\theta,\phi_{1},\dotsc,\phi_{k-1})=\theta} is the
projection, and
\w[.]{\rho(\theta,\phi_{1},\dotsc,\phi_{k-1})=\|\xj{k}-\xj{0}\|\in[m,M]}

Now for each \w[,]{\ell\in[m,M]} the fiber \w{\rho^{-1}(\ell)}
is diffeomorphic to the \cspace\ \w{\Cs(\Gkpc)} of a closed chain
having \w{k+1} links of lengths
\w[.]{(\ell_{1},\dotsc,\ell_{k},\ell)} As in \S \ref{dfixlink},
we have \w[,]{\Cs(\Gkpc)\cong S^{d-1}\times\hCs(\Gkpc)} so
\w{\Cs(\Gkpc)} is locally product-equivalent to
\w[,]{\hCs(\Gkpc)}and in fact \w{\Cs(\Gkpc)} is $S$-equivalent to
\w{\hCs(\Gkpc)} with respect to \w[.]{\Delta=\{\xj{k}\}} As $\ell$
varies, we obtain the mechanism \w[.]{\Gkprc{k+1}}

If \w{\vv:=\xe-\xs} vanishes at $\Vc$, but $\Vc$ is not aligned, then the
work map $\psi$ is a submersion at $\Vc$, and the same holds for 
\w[,]{\Cs(\Gkprc{k+1})} so they are $S$-equivalent. If \w{\vv=\bz} at
$\Vc$ and $\Vc$ is aligned, choose the coordinate $\theta$ be the
direction of the alignment vector $\vw$. 
\end{proof}

\begin{mysubsect}[\label{sdwm}]{Decomposing the work map}

Consider an arbitrary mechanism $\Gamma$ with base point \w{\xs} and
work map \w{\psi:\CsG\to\RR{d}} for the end-effector \w[.]{\xe} Note
that \w{\CsG} is locally diffeomorphic to the product
\w{S^{d-1}\times\hCs(\Gamma)} (\S \ref{dfixlink}), since the bundle
\w{\hCs(\Gamma)\hra\CsG\to S^{d-1}} (for \w[)]{\vv:=\xe-\xs\in S^{d-1}} 
is locally trivial (assuming $\vv$ does not vanish). If we choose
local spherical coordinates \w{S^{d-1}\times\RR{}_{+}} for the work
space \w[,]{\Wc\subseteq\C(\Delta)\subseteq\RR{d}} the work map
\w{\psi:\CsG\to\Wc\subseteq S^{d-1}\times\RR{}_{+}} may be written
locally in the form
\begin{myeq}\label{eqpsi}
\psi=\Id_{S^{d-1}}\times\tpsi:S^{d-1}\times\hCs~\to~
S^{d-1}\times\RR{}_{+}
\end{myeq}
\noindent for some smooth function \w{\tpsi:\hCs\to\RR{}_{+}} (which
is the work function for the associated reduced \cspace).
Note that the derivative of the work function $\psi$ may thus be
written in the form:
\begin{myeq}\label{eqdpsi}
(\ddp\psi)_{(\vv,\hV)}~=~\left(\begin{array}{ll} I_{d-1} & \bze \\ \bze
  & (\nabla\tpsi)_{\hV}\end{array}\right).
\end{myeq}
\noindent which shows that \w{\ddp\psi} has rank $d$ or \w[.]{d-1}
\end{mysubsect}

\begin{prop}\label{pgensin}
If \w{\Vc=(\hV,\Vc')\in\CsG} is a smooth configuration for a
mechanism $\Gamma$ with work function
\w{\psi=\Id_{S^{d-1}}\times\tpsi} as in \wref[,]{eqpsi} with
\w[,]{\xe\neq\xs} and $\hV$ is a non-degenerate singular point of
$\tpsi$, then \w{\CsG} is $S$-equivalent at $\Vc$ to an aligned
configuration of an open $n$-chain for some \w[.]{n\geq 1}
\end{prop}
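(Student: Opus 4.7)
The plan is to apply the Morse lemma to $\tpsi$ at the non-degenerate critical point $\hV$, and separately exhibit an aligned configuration of an open $n$-chain whose reduced work function has the same Morse normal form.

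Since $\Vc$ is a smooth point of $\CsG$, the bundle structure $\CsG\cong S^{d-1}\times\hCs$ is a local diffeomorphism near $\Vc$, and the work map factors as $\psi=\Id_{S^{d-1}}\times\tpsi$ as in \eqref{eqpsi}. Because $\hV$ is a non-degenerate critical point of $\tpsi$, the Morse lemma furnishes local coordinates $(\bx,\by)\in\RR{p}\times\RR{q}$ on $\hCs$ near $\hV$, with $p+q=\dim\hCs$, such that
\[
\tpsi(\bx,\by)=\tpsi(\hV)+\|\bx\|^{2}-\|\by\|^{2}.
\]

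Next I would compute the Hessian of the reduced work function of an open $n$-chain in $\RR{d}$ at an aligned configuration. Writing each link direction as $\vw_{i}=\epsilon_{i}\sqrt{1-\|\bu_{i}\|^{2}}\,\ve_{1}+\bu_{i}$ with $\bu_{i}\perp\ve_{1}$, $\epsilon_{i}\in\{\pm 1\}$, and fixing $\bu_{1}=0$ to pass to $\hCs$, a second-order Taylor expansion of $\|\sum_{i}\ell_{i}\vw_{i}\|$ around the aligned state gives a quadratic form that, by $\SO{d-1}$-symmetry around $\ve_{1}$, splits as $d-1$ isomorphic copies of a single $(n-1)$-variable form whose coefficients depend polynomially on the $\ell_{i}$ and $\epsilon_{i}$. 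Varying $n$ and adjusting the signs and lengths realizes a non-degenerate Morse critical point of the prescribed signature; any residual mismatch between $(p,q)$ and a signature divisible by $d-1$ is absorbed by the trivial $\RR{k}$, $\RR{m}$ factors permitted in Definition \ref{dlocew}(b).

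Applying the Morse lemma to both $\tpsi$ and the open chain's reduced work function then produces local diffeomorphisms onto a common quadratic normal form, whose composition is a local homeomorphism $f:U\to U'$ between neighborhoods of $\hV$ in $\hCs$ and of the aligned configuration in the open chain's reduced \cspace. Combined with the identity on the shared $S^{d-1}$ factor from $\psi=\Id_{S^{d-1}}\times\tpsi$, this furnishes the required $S$-equivalence, and the work-map factorization diagram commutes automatically in the Morse coordinates.

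The principal obstacle is the signature-matching step: the $\SO{d-1}$-symmetry forces the open-chain Hessian signature to be a multiple of $d-1$, whereas the Morse signature $(p,q)$ of $\tpsi$ at $\hV$ need not be. Overcoming this requires treating the extra $\RR{k}$ and $\RR{m}$ factors of Definition \ref{dlocew}(b) as directions along which both work maps are locally constant, thereby absorbing the discrepancy while preserving the commutativity condition for $S$-equivalence.
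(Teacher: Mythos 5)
Your overall route is the paper's: put $\tpsi$ into Morse normal form at $\hV$, do the same for the reduced work function of an open chain at an aligned configuration, and compose the two Morse charts. The one substantive difference is that you compute the open-chain Hessian directly from the second-order expansion of $\|\sum_{i}\ell_{i}\vw_{i}\|$, where the paper instead passes through Proposition \ref{pzero} to the prismatic closed chain and quotes Milgram--Trinkle and Farber for the Morse property and the index formula; your computation is correct (the $(d-1)$-fold splitting under the $\SO{d-1}$-action is exactly what happens), and it is the more self-contained of the two.

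The gap is your resolution of the signature-matching step. The Euclidean factors $\RR{k}$, $\RR{m}$ of Definition \ref{dlocew}(b) cannot absorb any discrepancy, because $S$-equivalence requires the work maps to factor through the projections $W\to U$ and $W'\to U'$, i.e.\ to be locally constant along those factors --- and at a non-degenerate critical point $\tpsi$ is locally constant in no direction whatsoever. (This is not merely a smooth obstruction: the critical level set of \eqref{eqmorse} near $\hV$ is a cone on $S^{j-1}\times S^{D-j-1}$, with $D=\dim\hCs(\Gamma)$, which for $j=1$ has a disconnected deleted neighborhood of the cone point and hence is not locally a product with $\RR{k}$ for $k\geq 1$.) So $k=m=0$ is forced, and both the dimension and the index must be matched \emph{exactly} by the open chain. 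For $d=2$ this is achievable: take $n=D+1$ and choose the number of backward-pointing links equal to the Morse index of $\tpsi$ at $\hV$, which is precisely what the paper's appeal to the Milgram--Trinkle/Farber index computation accomplishes, and what your own Hessian formula also yields for generic lengths. For $d\geq 3$ the divisibility-by-$(d-1)$ constraint you identified is a genuine obstruction that your argument does not overcome; the honest conclusion is that your proof establishes the planar case, not that the Euclidean factors dispose of the general one.
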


\begin{proof}
By the Morse Lemma (cf.\ \cite[Theorem 2.16]{MatsM}) we may choose
local coordinates \w{\vt=(t_{1},\dotsc,t_{k-d+1})} for
\w{\hCs(\Gamma)} near $\hV$ (where \w[),]{k=\dim\CsG} so that
$\tpsi$ has the form
\begin{myeq}\label{eqmorse}
\tpsi(\vt)=a_{0}+\sum_{i=1}^{j}t_{i}^{2}-\sum_{i=j+1}^{k-d+1}t_{i}^{2}~.
\end{myeq}
\noindent On the other hand, by Proposition \ref{pzero} the
\cspace\ \w{\Cs(\Gnl{n})} for an open $n$-chain at any
configuration \w{\Vn} is $S$-equivalent to the reduced \cspace\
\w{\hCs(\Gkprc{n+1})} at some configuration \w[,]{\hVnp} where 
\w{\Gkprc{n+1}} is a prismatic closed \wwb{n+1}chain. The reduced work
map 
$$
\hphi:\hCs(\Gkprc{n+1})\to\gamma\subseteq\Wc\subseteq\RR{d}
$$
\noindent assigns to each \w{\hV\in\hCs(\Gkprc{n+1})} the length of the
variable link (with \w[,]{\gamma\cong[m,M]} the segment of possible lengths).

As shown in \cite[Theorem 5.4]{MTrinG}, $\hphi$ is a Morse function,
having (non-degenerate) singular points precisely at the aligned
configurations \w{\hVnp} of the closed chain \w[.]{\Gkprc{n+1}}
Although Milgram and Trinkle do not calculate the index of $\hphi$ at
\w[,]{\hVnp} their computation of the Hessian of $\hphi$ in
\cite[Key Example, p.~255]{MTrinG}, combined with Farber's proof of
\cite[Lemma 1.4]{FarbT} for the planar case, show that this index is
equal to \w[,]{n-k} where $k$ is the number of forward-pointing links
in the configuration \w[.]{\hVnp} Thus by the Morse Lemma again we may
choose an aligned configuration \w{\hVnp} and local coordinates in
\w{\hCs(\Gkprc{n+1})} around it so that $\hphi$ too has the form
\wref[,]{eqmorse} and thus \w{\CsG} is $S$-equivalent at $\Vc$ to
\w{\hCs(\Gkprc{n+1})} at \w[.]{\hVnp} By Proposition \ref{pzero} it is then
readily seen to be $S$-equivalent to \w{\Cs(\Gnl{n})} at the
corresponding aligned open-chain configuration \w[.]{\Vn}
\end{proof}

%
%
\section{Pullbacks of \cspace s}\label{cpbc}

We now describe a procedure for viewing the \cspace\ of an arbitrary linkage
$\Gamma$ as a pullback, obtained by decomposing $\Gamma$ into two
simpler sub-mechanisms. The basic idea is a familiar one \wh see,
e.g., \cite{MTrinG}.

\begin{mysubsection}{Pullbacks}\label{spb}
Let \w{\Gkl} denote an open chain which is a sub-mechanism of $\Gamma$
(cf.\ \S \ref{dopen}), and let \w{\Gp} denote the mechanism obtained from
$\Gamma$ by omitting the $k$ links of \w{\Gkl} (and all vertices but
\w{\xj{0}} and \w[).]{\xj{k}} For simplicity we choose \w{\xs:=\xj{0}}
as the common base-point of $\Gamma$, \w[,]{\Gkl} and \w[,]{\Gp} and
\w{\xe:=\xj{k}} as the common end-effector of \w{\Gkl} and \w[.]{\Gp} 
See Figure \ref{fdecompose}.

\begin{figure}[htb]
\epsfysize=3.5cm \leavevmode \epsffile{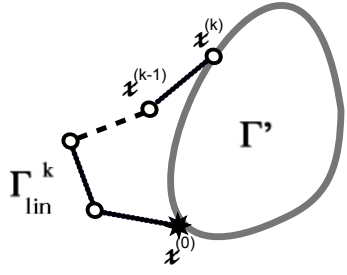}
\caption{Decomposing $\Gamma$ into two sub-mechanisms}
\label{fdecompose}
\end{figure}

The \wspace\ of both mechanisms \w{\Gp} and \w{\Gkl} (i.e., the set of possible
locations for \w[)]{\xe} is contained in \w[,]{\RR{d}}
and we have work maps \w{\psi:\Cs(\Gp)\to\RR{d}} and 
\w{\phi:\Cs(\Gkl)\to\RR{d}} which associate to each configuration the
location of \w[.]{\xe}

Note that the pointed \cspace\ \w{\Cs(\Gkl)} is a manifold (diffeomorphic to
\w[)]{(S^{d-1})^{k}} with a natural embedding \w[,]{i:\Cs(\Gkl)\hra\RR{kd}}
and similarly \w{j:\Cs(\Gp)\to\RR{M}} for a suitable Euclidean space
\w[.]{\RR{M}}  This can be done, for example, by using the position
coordinates in \w{\RR{d}} for every vertex in $\Gamma$.

Let \w{X:=\Cs(\Gkl)\times\RR{M}} and \w[,]{Y:=\RR{d}\times\RR{M}} and define
\w{h:X\to Y} to be the product map \w{\phi\times\Id_{\RR{M}}} and
\w{g:\Cs(\Gp)\to Y} to be \w[,]{(\psi,j)} so that $g$ is an embedding of
\w{\Cs(\Gp)} as a submanifold in $Y$. Since we have a pullback square:
\mydiagram[\label{eqpb}]{
\CsG \ar[d]\ar[r] & \Cs(\Gkl) \ar[d]^{\phi}\\
\Cs(\Gp)\ar[r]^{\psi} & \Wc\subseteq\RR{d}~,
}
\noindent \w{\CsG} may be identified with the preimage of the subspace
\w{\Cs(\Gp)\subseteq Y} under $h$\vsm.

Let \w{\Vp\in\Cs(\Gp)} and \w{\Vk\in\Cs(\Gkl)} be matching configurations with
\w[,]{\psi(\Vp)=\phi(\Vk)} and let \w{\bx\in X} be the
configuration \w[,]{(\Vk,j(\Vp))} so that \w[:]{h(\bx)=g(\Vp)}
\mydiagram[\label{eqpullback}]{
\hspace*{3.5mm}\bx\in X\ar@<2.7ex>[d]_{h} & = &
\Cs(\Gkl)\ar@<-2.8ex>[d]_{\phi}\ni\Vk & \times &
\RR{M}\ar@<-3.7ex>[d]_{\Id}\ni j(\Vp)\\
h(\bx)\in Y           & = & \hspace*{8mm}\RR{d}\ni\phi(\Vk)  & \times &
\RR{M}\ni j(\Vp) \\
\hspace*{0.5mm}\Vp\in\Cs(\Gp)
\ar@^{(->}[u]<-2.7ex>^{g} \ar[urr]^{\psi}\ar@_{(->}[urrrr]_{j} & & & &
}
\noindent We want to know if the point \w{\Vc\in\CsG} defined by \w{(\Vp,\Vk)}
is singular. By \cite[I, Theorem 3.3]{HirsD}, $\Vc$ is smooth if
\w{h\pitchfork\Cs(\Gp)} \wh i.e., $h$ is locally transverse to \w{\Cs(\Gp)} at the
points \w{\bx\in X} and \w[,]{\Vp\in\Cs(\Gp)} which means that
\w[.]{\Image\dhh_{\bx}~+~T_{\Vp}(\Cs(\Gp))~=~T_{\Vp}(Y)~=~\RR{d}\times\RR{M}}

Since \w{\Id_{\RR{M}}} is onto, this is equivalent to:
\begin{myeq}\label{etransv}
\Image(\ddp\phi)_{\Vk}~+~\Image(\ddp\psi)_{\Vp}~=~\RR{d}
\end{myeq}
\end{mysubsection}

\begin{mysubsection}{Generic singularities in pullbacks}
\label{sgensin}
Clearly, the failure of \wref{etransv} is a \emph{necessary}
condition for \w{\Vc=(\Vp,\Vk)} to be singular in \w[.]{\CsG} Note
that if \wref{etransv} does not hold, then neither
\w{(\ddp\phi)_{\Vk_{n}}} nor \w{(\ddp\psi)_{\Vp_{n}}} is onto
\w[.]{\RR{d}} By Lemma \ref{lwork}, the first implies that the configuration
\w{\Vk_{n}} for the open chain \w{\Gnl{k}} must be aligned, while the
second implies that \w{(\ddp\psi)_{\Vp_{n}}} is of rank $<d$.
\end{mysubsection}

\begin{defn}\label{dgensing}
Given a pullback diagram as in \wref[,]{eqpb} a configuration
\w{(\Vp,\Vk)\in\CsG\subseteq\Cs(\Gp)\times\Cs(\Gkl)} will be called
\emph{generically non-transverse} if \w{\hVp} is a non-degenerate
singular point of $\tpsi$, and \w[.]{\xj{0}\neq\xj{k}}
\end{defn}

\begin{remark}\label{rgensin}
Note that since \w{\tpsi:\hCps\to\RR{}_{+}} is an algebraic function,
generically it will be a Morse function, so any singular point
\w{\hVp} is non-degenerate. Likewise, in the moduli space
\w{\Lambda=\RR{k}_{+}} for open $k$-chains, the
subspace of moduli $\lambda$ for which \w{\Gkl} has no aligned
configurations with \w{\xj{0}=\xj{k}} is Zariski open in
$\Lambda$. Thus among the potentially singular configurations of
\w{\CsG} (i.e., those for which \wref{etransv} fails), the generically
non-transverse ones are indeed generic.
\end{remark}

\begin{prop}\label{pgensing}
Given a pullback diagram \wref[,]{eqpb} any generically
non-transverse configuration \w{(\Vp,\Vk)} is the product of a
Euclidean space with a cone on a homogeneous quadratic hypersurface,
so in particular it is a topological singularity of \w[.]{\CsG}
\end{prop}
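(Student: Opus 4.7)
The plan is to reduce both work maps to Morse normal form near $(\Vp,\Vk)$ and then read off the local structure of the pullback directly.

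First, since $\xj{0}\neq\xj{k}$, the vector $\xj{k}-\xj{0}$ is nonzero at our configuration, so the local trivializations of \S \ref{sdwm} apply simultaneously to both factors of the pullback square \eqref{eqpb}. We obtain local diffeomorphisms $\Cs(\Gp)\cong S^{d-1}\times\hCps$ and $\Cs(\Gkl)\cong S^{d-1}\times\hCs(\Gkl)$, under which the two work maps factor as $\psi=\Id_{S^{d-1}}\times\tpsi$ and $\phi=\Id_{S^{d-1}}\times\tilde\phi$. The pullback condition $\psi(\Vp)=\phi(\Vk)$ accordingly splits into matching the two $S^{d-1}$-components, which contributes a local Euclidean $\RR{d-1}$-factor to $\CsG$, together with the single scalar equation $\tpsi=\tilde\phi$ on the reduced factors.

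Next, I would apply the Morse lemma to each scalar factor. The generic non-transversality hypothesis directly supplies a non-degenerate critical point of $\tpsi$ at $\hVp$. For $\tilde\phi$ one observes, as in \S \ref{sgensin}, that non-transversality forces $(\ddp\phi)_{\Vk}$ to have rank $<d$, so by Lemma \ref{lwork} the configuration $\Vk$ must be aligned; the Milgram--Trinkle Hessian computation recalled in the proof of Proposition \ref{pgensin}, combined with the passage from open chain to prismatic closed chain via Proposition \ref{pzero}, then yields a non-degenerate critical point of $\tilde\phi$ at the corresponding reduction of $\Vk$. Choosing Morse coordinates $(t_{1},\ldots,t_{n_{1}})$ and $(s_{1},\ldots,s_{n_{2}})$, we obtain
$$
\tpsi \,=\, a_{0} + Q_{1}(t) \qquad \text{and} \qquad \tilde\phi \,=\, a_{0} + Q_{2}(s),
$$
with $Q_{1},Q_{2}$ non-degenerate quadratic forms and common constant $a_{0}=|\xj{k}-\xj{0}|>0$, since both functions agree with the radial distance at the base configuration.

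Subtracting these normal forms, the scalar pullback condition reduces to
$$
Q(t,s)\;:=\;Q_{1}(t)-Q_{2}(s)\;=\;0,
$$
a non-degenerate homogeneous quadratic equation on $\RR{n_{1}+n_{2}}$, whose zero locus is exactly a cone on a homogeneous quadric hypersurface in $\mathbb{RP}^{n_{1}+n_{2}-1}$. Together with the Euclidean $\RR{d-1}$-factor from the previous step, this exhibits $\CsG$ locally as a product of a Euclidean space with a cone on a homogeneous quadric, as claimed. In the generic case, where the signatures of $Q_{1}$ and $Q_{2}$ make $Q$ indefinite, the vertex is a genuine topological singularity of $\CsG$, since the link of such a cone is not a sphere of the expected dimension.

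The main obstacle in this plan is not the local pullback algebra but rather the appeal in step two to the Morse property of $\tilde\phi$ at an aligned configuration, which rests on the non-trivial Milgram--Trinkle Hessian computation transported to the open chain via Proposition \ref{pzero}; everything else is a direct assembly of Morse's lemma with the pullback square \eqref{eqpb}.
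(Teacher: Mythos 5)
Your argument is correct, and it reaches the conclusion by a genuinely more direct route than the paper. The paper's own proof first invokes Proposition \ref{pgensin} to replace the factor \w{\Cs(\Gp)} by the configuration space of an open $n$-chain at an aligned configuration (work-equivalently), observes that the resulting pullback of two aligned open chains is $S$-equivalent to a closed \w{(n+k)}-chain at an aligned configuration, and then cites the known local structure of such closed-chain configuration spaces (\cite[Theorem 1.6]{FarbT}, \cite[Theorem 2.6]{KMillS}) to conclude it is a cone on a homogeneous quadric. You instead unwind that citation: after splitting off the common \w{S^{d-1}} direction (legitimate since \w{\xj{0}\neq\xj{k}}), you reduce the pullback to the single scalar equation \w{\tpsi=\tilde\phi}, put both sides in Morse normal form, and exhibit the local model \w{\{Q_{1}(t)=Q_{2}(s)\}} explicitly. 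The two proofs rest on the same nontrivial inputs --- the Morse hypothesis on $\tpsi$ from Definition \ref{dgensing}, and the Milgram--Trinkle computation (via Proposition \ref{pzero}) showing that the radial work function of an aligned open chain has a non-degenerate critical point there --- but yours is more self-contained and makes the quadratic cone visible, whereas the paper's reduction has the advantage of identifying the singularity concretely with the classical aligned-closed-chain picture and of reusing Proposition \ref{pgensin}, which is needed again in Theorem \ref{tgensing}. Two small remarks: like the paper, you must read Definition \ref{dgensing} as implicitly assuming that \wref{etransv} fails (otherwise \w{\Vk} need not be aligned and the configuration is smooth); and your closing caveat about the definite case \w{Q=Q_{1}-Q_{2}} (where the ``cone'' degenerates to its vertex and the point can be a manifold point of too-small dimension) is a genuine boundary case that the paper glosses over as well, so it does not count against your argument relative to the paper's.
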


\begin{proof}
Since \w{\hVp} is a non-degenerate singular point of $\tpsi$, by
Proposition \ref{pgensin} the work map
\w{\psi:\CsGp\to\Wc\subseteq\RR{d}} is work-equivalent to the
work map $\eta$ of an open chain \w{\Gnl{n}} at some aligned
configuration \w[.]{\Vn} Thus the pullback diagram \wref{eqpb} may be
replaced by one of the form
\mydiagram[\label{eqlinpb}]{
\CsG \ar[d]\ar[r] & \Cs(\Gkl) \ar[d]^{\phi}\\
\Cs(\Gnl{n})\ar[r]^{\eta} & \Wc\subseteq\RR{d}~,
}
\noindent so that $\C$ itself is $S$-equivalent at
\w{(\Vk,\Vn)} to the \cspace\ of a closed chain with \w{(n+k)}
links at an aligned configuration (since $\phi$ and $\eta$ were
non-transverse). This is known to be the cone on a homogeneous
quadratic hypersurface, by \cite[Theorem 1.6]{FarbT} and
\cite[Theorem 2.6]{KMillS}, so it is topologically singular.
\end{proof}

%
%
\section{Inductive construction of \cspace s}\label{ciccspaces}

We now define an inductive process for studying the local behavior of
a configuration $\Vc$ of a linkage $\Gamma$. This consists of
successively discarding open chains of $\Gamma$ while preserving the
local structure. 

\begin{mysubsection}{The inductive procedure}\label{sip}
We saw in \S \ref{spb} how removing an open chain sub-mechanism from
$\Gamma$ allows one to describe the \cspace\ \w{\CsG} as a
pullback of two \cspace s \w{\Cs(\Gkl)} and \w[,]{\Cs(\Gp)} where
the first is completely understood, and the second is simpler than
the original \w[.]{\CsG}

This idea may now be applied again to \w[:]{\Cs(\Gp)} by
repeatedly discarding (or adding) open chain sub-mechanisms, we
construct a sequence of pullbacks
\mydiagram[\label{eqpullb}]{
\C(\Gamma_{n+1}) \ar[d]\ar[r] & \C(\Lambda_{n}) \ar[d]^{\phi_{n}}\\
\C(\Gamma_{n})\ar[r]^{\psi_{n}} & \RR{d}~,
}
\noindent for \w[,]{1\leq n< M} where each \w{\Gamma_{n-1}} is a
sub-mechanism of \w[,]{\Gamma_{n}} with \w[,]{\Gamma=\Gamma_{M}} and
\w{\Lambda_{n}} is an open chain in \w{\RR{d}} (so
\w{\C(\Lambda_{n})} is a product of \ww{(d-1)}-spheres).
The maps \w{\psi_{n}} and \w{\phi_{n}}are work
maps for the common endpoint of \w{\Gamma_{n}} and \w[.]{\Lambda_{n}}

Each configuration $\Vc$ for $\Gamma$ determines a sequence
of pairs \w{\Vp_{n+1}=(\Vp_{n},\Vk_{n})} in \w[,]{\C(\Gamma_{n+1})} as in
\wref[,]{eqpullb} where \w{\Vk_{n}} is necessarily a smooth point of
\w[.]{\C(\Lambda_{n})} Evidently, if \w{\Vp_{n}} is a smooth point of
\w[,]{\C(\Gamma_{n})} \w{\Vp_{n+1}} will be, too, if \wref{etransv}
holds.
\end{mysubsection}

\begin{remark}\label{rip}
Note that there is usually more than one way to decompose a given
linkage $\Gamma$ as in \S \ref{spb}, so the full inductive process
described above is actually encoded by an (inverted) rooted tree,
with varying degrees at each node (and the root at $\Gamma$ itself).
Any rooted branch \w{(\Gamma_{n},\Lambda_{n})_{n=k}^{M-1}}
\wb{\Gamma_{M}=\Gamma} of this tree will be called a \emph{decomposition}
of $\Gamma$.

This flexibility can be very useful in applying the inductive
procedure (see \S \ref{egsing} below for an example).
\end{remark}

\begin{mysubsection}{Generic singularities in \ $\CsG$}\label{sgensinc}
Our goal is to use this procedure to study singular configurations of
\w[.]{\CG} Here we start with the simplest case, which is also the
\emph{generic} form of singularities in \cspace s, as we shall see below.

Thus we assume by induction that \w{\Vp_{n}} is a smooth
configuration, but \wref{etransv} \emph{fails}. Our goal is to analyze
this failure in the generic case, and then show that in this case
\w{\Vp_{n+1}} is a singular point. Eventually, we would like to use
this to deduce that the original configuration $\Vc$ is singular, too.

In \S \ref{sgensin}, we saw how to identify positively the generic
singularities appearing in one step in the inductive process of \S \ref{sip},
defined by a pullback diagram \wref[:]{eqpullb} namely, if
\w{\Vp_{n+1}\in\C(\Gamma_{n+1})} is defined by a pair of smooth
configurations \w[,]{(\Vp_{n},\Vk_{n})} but \wref{etransv} fails, then
generically at least, \w{\Vp_{n+1}} is a topological singularity.
However, this does not yet guarantee that the corresponding
configuration $\Vc$ in \w{\CsG} itself is singular (unless
\w[,]{\Gamma=\Gamma_{n+1}} of course).
\end{mysubsection}

\begin{example}\label{egsing}
Let \w{\Gkc{4}} be a planar closed $4$-chain with links
of lengths \w[,]{\lj{1},\lj{2},\lj{3}} and \w[.]{\lj{4}} See Figure
\ref{ffourbar}.

\begin{figure}[htb]
\epsfysize=4.0cm
\leavevmode \epsffile{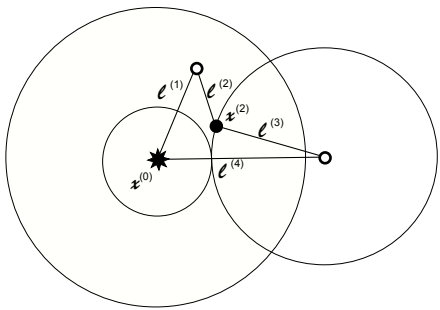}
\caption{Workspace for the point \w{\xj{2}} of a closed $4$-chain}
\label{ffourbar}
\end{figure}

Generically, \w{\hCs(\Gkc{4})} is a smooth $1$-dimensional manifold,
with local parameter given by $\theta$ (the angle between \w{\vj{1}}
and \w[,]{\vj{3}} say). However, if \w[,]{\lj{1}+\lj{3}=\lj{2}+\lj{4}} then
\w{\hCs(\Gkc{4})} has a topological singularity \wh a node \wh at the
aligned configuration $\hV$ where the links \w{\vj{1}} and \w{\vj{3}}
face right, say, and \w{\vj{2}} and \w{\vj{4}} face left (see
\cite[Theorem 1.6]{FarbT}). In fact, if there are no further relations
among \w[,]{\lj{1},\dotsc,\lj{4}} this is the only singularity, and
\w{\hCs(\Gkc{4})} is a figure eight (the one point union of two
circles). We can think of \w{\Gkc{4}} as being decomposed into two
sub-mechanisms \w{\Gp} and \w[,]{\Gpp} each an open $2$-chain: 
\w{\Gp} consisting of \w{\vj{1}} and \w[,]{\vj{2}} and
\w{\Gpp} of \w{\vj{3}} and \w[.]{\vj{4}} Note that
\w[,]{\hV:=(\Vp,\Vpp)} where \w{\Vp} and \w{\Vpp} are both aligned.

In this case we can describe \w{\Cs(\Gkc{4})} explicitly in terms of
the work map \w{\phi:\Cs(\Gkc{4})\to\RR{2}} (for the vertex
\w[),]{\xe:=\xj{2}} which is a four-fold covering map at all points
but $\Vc$: in a punctured neighborhood of $\Vc$, neither 
\w{\Vp} nor \w{\Vpp} can be aligned, and each independently can have
an ``elbow up'' ($+$) or ``elbow down'' ($-$) position, which
together provide the four discrete configurations corresponding to
a single value of $\phi$. In \w[,]{\hCs(\Gkc{4})} taken
together, these give four different branches of the curve
(parameterized by $\theta$) \wh which coincide at $\Vc$. See Figure
\ref{fbranch}.

\begin{figure}[htb]
\epsfysize=3cm \leavevmode \epsffile{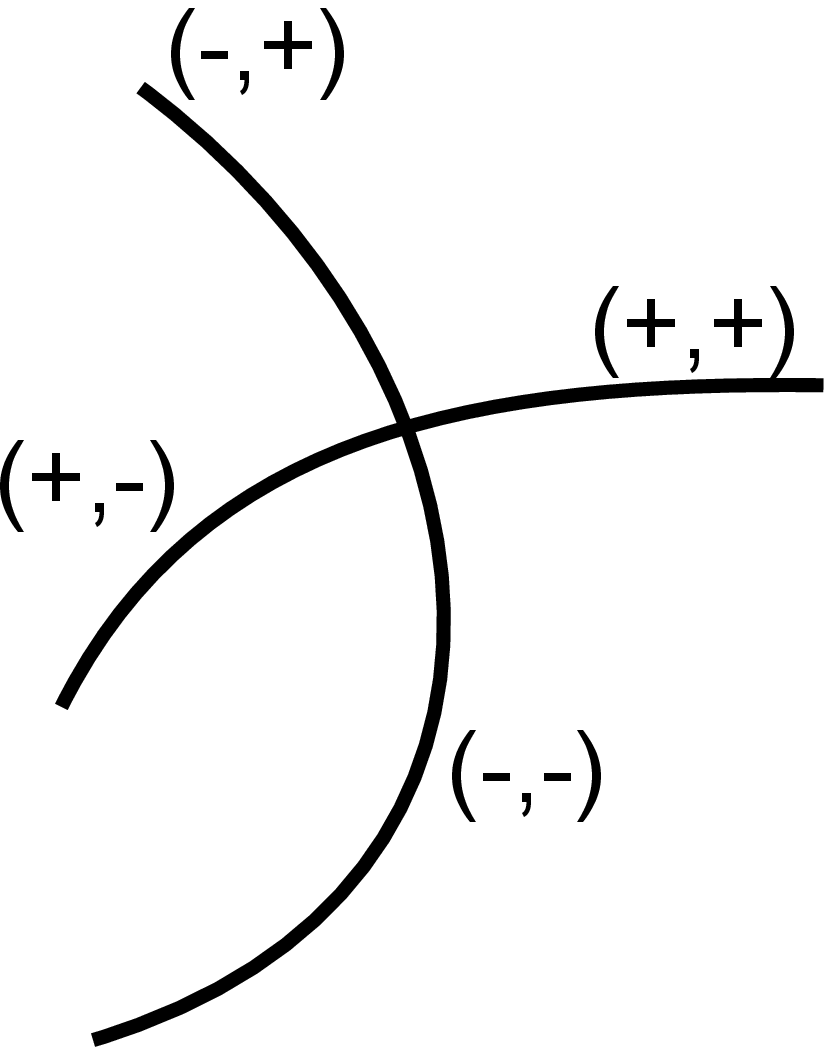} \caption{The
four branches of \ $\hCs(\Gamma_{\clos}^{4})$} \label{fbranch}
\end{figure}

Now assume given a linkage $\Gamma$ in which \w{\Gamma_{2}=\Gkc{4}} as
above (with \w[).]{\lj{1}+\lj{3}=\lj{2}+\lj{4}} Assume that to obtain 
\w{\Gamma_{3}} we add an open $2$-chain \w[,]{\Lambda_{2} \Gnl{2}}
having vertices \w[,]{\xj{0}} \w[,]{\xj{3}} and \w[,]{\xj{4}} with
\w{\|\xj{0}\xj{4}\|=\lj{5}} and \w[.]{\|\xj{3}\xj{4}\|=\lj{6}}  We
therefore now have a rigid triangle \w{\triangle\xj{0}\xj{3}\xj{4}}
(with \w{\xj{4}} in  ``elbow up'' or ``elbow down'' position relative
to the edge \w[).]{\xj{0}\xj{3}} Thus
\w[,]{\Cs(\Gamma_{3})=\Cs(\Gamma_{2})\times\{\pm1\}} and the
singularity at \w{\Vp_{2}:=\hV} is unaffected.  

In the last stage \w{\Gamma=\Gamma_{4}} is obtained by adding another
open $2$-chain \w{\Lambda_{3}:=\Gnl{2}} with one new vertex \w[,]{\xj{5}}
\w{\|\xj{4}\xj{5}\|=\lj{7}} and \w[.]{\|\xj{5}\xj{1}\|=\lj{8}}
We require the configuration \w{\Vj{2}_{2}} of \w{\Lambda_{3}} in which 
\w[,]{\xj{1}} \w[,]{\xj{4}} and \w{\xj{5}} are aligned 
to coincide with the aligned configuration \w{\Vp_{2}=\hV}
of \w{\Gamma_{2}} (and thus \w{\Vp_{3}=(\Vp_{2},+)} of \w[).]{\Gamma_{3}}

The effect of adding \w{\Lambda_{3}} is to prevent the open chain
\w{\Gp=\xj{0}\xj{1}\xj{2}} from ever being in an ``elbow down''
position, thus eliminating two of the four branches of
\w{\hCs(\Gamma_{\clos}^{4})} (see Figure \ref{fbranch}), so 
\w{\Vc:=(\Vp_{3},\Vj{2}_{2})} (which reduces to $\hV$ in
\w[)]{\Cs(\Gamma_{2})} is \emph{not} singular in \w[.]{\CsG} 

To show that this is indeed so, consider an alternative decomposition
of $\Gamma$ (see Remark \ref{rip} above), in which we start with the
closed $5$-chain \w[,]{\Gamma_{1}=\xj{4}\xj{5}\xj{1}\xj{2}\xj{3}} with
base point \w[.]{\xj{3}} See Figure \ref{faltdec}.
Note that \w{\Vp_{1}} corresponding to $\hV$ is non-singular in
\w[.]{\Cs(\Gamma_{1})} When we add the open $2$-chain
\w[,]{\Lambda_{1}=\xj{3}\xj{0}\xj{1}} we see that the configuration
\w{\Vk_{1}} corresponding to $\Vc$ \emph{is} aligned, but since the
work map \w{\phi_{1}:\Cs(\Gamma_{1})\to\RR{2}} determined by the work
point \w{\xj{1}} is a submersion at \w[,]{\Vp_{1}} condition
\wref{etransv} holds at \w[,]{\Vc=(\Vp_{1},\Vk_{1})} so $\Vc$ is smooth.
\end{example}

\begin{figure}[htb]
\epsfysize=5.0cm
\leavevmode \epsffile{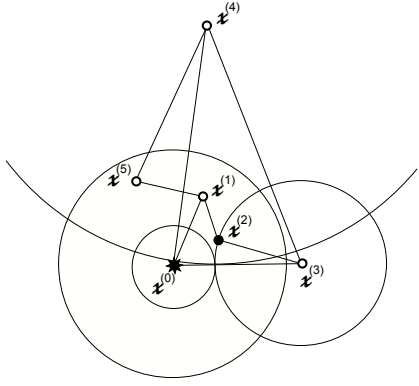}
\caption{An alternative decomposition of $\Gamma$}
\label{faltdec}
\end{figure}

\begin{mysubsection}{Singularities in the inductive process}\label{ssip}
In Example \ref{egsing} we saw that a singularity appearing at one
stage in the inductive process described in \S \ref{sip} can disappear
at a later stage. However, in that case the reason was that the
aligned configuration \w{\Vj{2}_{2}} of \w{\Lambda_{2}=\Gnl{2}}
matched up in \wref{eqpullb} with the aligned configuration
\w{\Vp_{3}} of \w[.]{\Gamma_{3}} 
\end{mysubsection}

\begin{defn}\label{dgennt}
For any linkage $\Gamma$, a configuration \w{\Vc\in\CsG} will be called
\emph{generically non-transversive} if for some decomposition
\w{(\Gamma_{n},\Lambda_{n})_{n=m}^{M-1}} of \w{\Gamma=\Gamma_{M}} 
(see \S \ref{rip}), the pair
\w{(\Vp_{m},\Vk_{m})\in\Cs(\Gamma_{m})\times\Cs(\Lambda_{m})} 
is generically non-transverse in the sense of Definition
\ref{dgensing}, and the open chain configurations
\w{\Vk_{n}\in\Cs(\Lambda_{n})} are not aligned for \w[.]{M>n\geq m}
\end{defn}

\begin{remark}\label{rgensnt}
As noted in Remark \ref{rgensin}, the condition that the original pair
\w{(\Vp_{m},\Vk_{m})} is generically non-transverse is indeed generic,
in the sense that it occurs in a subvariety of
\w{\Cs(\Gamma_{m})\times\Cs(\Lambda_{m})} of positive codimension.
Since the work maps each open chain
\w{\phi_{n}:\Cs(\Lambda_{n})\to\RR{d}} are algebraic for each
\w[,]{n>m} the subvariety of \w{\Cs(\Gamma_{n})\times\Cs(\Lambda_{n})}
consisting of pairs \w{(\Vp_{n},\Vk_{n})} for which \w{\Vp_{n}}
corresponds to \w{\Vp_{n-1}} (and eventually to \w[)]{\Vp_{m}} and
\w{\Vk_{n}} is aligned form a subvariety of positive codimension, so
the condition that $\Vc$ is generically non-transversive in the sense of
Definition \ref{dgennt} is indeed generic among the singular points of
\w[.]{\CsG}
\end{remark}

\begin{thm}\label{tgensing}
For any linkage $\Gamma$, a generically non-transversive
configuration $\Vc$ is a topological singular point of \w{\CsG}
\wh in fact, the product of a cone on a homogeneous quadratic
hypersurface by a Euclidean space.
\end{thm}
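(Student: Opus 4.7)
The plan is to induct along the stages $n=m,m+1,\dotsc,M-1$ of a decomposition $(\Gamma_n,\Lambda_n)_{n=m}^{M-1}$ witnessing that $\Vc$ is generically non-transversive, showing that at every stage the configuration $\Vp_n\in\Cs(\Gamma_n)$ induced by $\Vc$ admits an open neighborhood homeomorphic to $\RR{a_n}\times\mathrm{Cone}(Q)$ for a fixed homogeneous quadratic hypersurface $Q$ (determined once and for all at the initial pullback) and some non-negative integer $a_n$. By Definition \ref{dgennt}, this decomposition satisfies (i) the pair $(\Vp_m,\Vk_m)$ is generically non-transverse in the sense of Definition \ref{dgensing}, and (ii) $\Vk_n$ is not aligned for $m<n\leq M-1$.

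The base case $n=m+1$ is immediate from Proposition \ref{pgensing} applied to the pullback square \eqref{eqpullb} at level $m$, which produces a neighborhood of $\Vp_{m+1}=(\Vp_m,\Vk_m)$ of the asserted form $\RR{a_{m+1}}\times\mathrm{Cone}(Q)$. For the inductive step, fix $n\geq m+1$ and assume $\Vp_n$ has a neighborhood $N\subseteq\Cs(\Gamma_n)$ homeomorphic to $\RR{a_n}\times\mathrm{Cone}(Q)$. Since $\Vk_n$ is not aligned, Lemma \ref{lwork} gives that $\phi_n\colon\Cs(\Lambda_n)\to\RR{d}$ is a submersion at $\Vk_n$; the local submersion theorem then produces coordinates $(y,z)\in U\times V\subseteq\RR{d}\times\RR{s}$ near $\Vk_n$ (with $s=\dim\Cs(\Lambda_n)-d$) in which $\phi_n(y,z)=y$. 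In these coordinates the pullback \eqref{eqpullb} cuts out
$$\Cs(\Gamma_{n+1})\cap(N\times U\times V)=\{(\Vp,y,z):\psi_n(\Vp)=y\in U\},$$
and the assignment $(\Vp,y,z)\mapsto(\Vp,z)$ is a homeomorphism of this set onto $N\times V$. Applying the inductive hypothesis yields a neighborhood of $\Vp_{n+1}$ homeomorphic to $\RR{a_n+s}\times\mathrm{Cone}(Q)$, completing the step.

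Iterating from $n=m+1$ up to $n=M$ exhibits $\Vc=\Vp_M\in\CsG$ as the product of a Euclidean space with $\mathrm{Cone}(Q)$, which is a topological singularity by Proposition \ref{pgensing} (or directly from \cite[Theorem~1.6]{FarbT} together with \cite[Theorem~2.6]{KMillS}). The main point I would want to check carefully is the inductive step above: the product decomposition must be genuinely topological, since from stage $m+1$ onward $\Vp_n$ is itself singular. This works because, once $\phi_n$ is rectified to the projection $(y,z)\mapsto y$, the pullback becomes the graph of $\psi_n\rest{N}$ crossed with $V$, so no smoothness of $\Cs(\Gamma_n)$ at $\Vp_n$ is used. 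The non-alignment hypothesis on $\Vk_n$ for $n>m$ is exactly what prevents a second cone singularity from entering at a later stage \wh the reverse phenomenon is what allowed Example \ref{egsing} to cancel a pre-existing singularity \wh and thereby justifies that $Q$ remains fixed throughout the induction.
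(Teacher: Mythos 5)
Your proof is correct and follows essentially the same route as the paper's: induct along the decomposition, use Proposition \ref{pgensing} for the base case, and at each later stage use non-alignment of $\Vk_{n}$ to rectify $\phi_{n}$ to a projection so that the pullback only contributes a Euclidean factor. Your explicit graph argument for why the product decomposition is purely topological (requiring no smoothness of $\Cs(\Gamma_{n})$ at $\Vp_{n}$) is a welcome elaboration of a step the paper states more tersely.
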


\begin{proof}
Let \w{(\Vp_{m},\Vk_{m})} be a generically non-transverse
configuration of
\w[,]{\Cs(\Gamma_{m+1})\subseteq\Cs(\Gamma_{m})\times\Cs(\Lambda_{m})}
so by Proposition \ref{pgensing} it is the cone on a homogeneous
quadratic hypersurface. By induction on the decomposition 
\w[,]{(\Gamma_{n},\Lambda_{n})_{n=m}^{M-1}} we may assume that at
the $n$-th stage the configuration \w{\Vp_{n}\in\Cs(\Gamma_{n})}
has a neighborhood $U$ of the stated form. By Definition
\ref{dgennt} we know that the work map
\w{\phi_{n}:\Cs(\Lambda_{n})\to\RR{d}} is a submersion at
\w[,]{\Vk_{n}} so it is work-equivalent at \w{\Vk_{n}} (Definition
\ref{dlocew}) to a projection \w{\pi:\RR{N_{n}}\to\RR{d}} (see
\cite[Theorem 7.8]{LeeS}). Therefore, in the pullback
\w{\Cs(\Gamma_{N+1})} the configuration
\w{\Vp_{n+1}=(\Vp_{n},\Vk_{n})} has a neighborhood
\w{U\times\RR{N_{n}-d}} \wh which is again of the required form.
\end{proof}

\begin{remark}\label{rmechanism}
Note that if \w{\Gamma=\Gamma_{M}} has a decomposition
\w{(\Gamma_{n},\Lambda_{n})_{n=m}^{M-1}} as in \S \ref{rip} and
\wref{etransv} holds at \w{\Vp_{n+1}=(\Vp_{n},\Vk_{n})} for each
$n$, then the configuration \w{\Vc=\Vp_{M}\in\Cs(\Gamma_{n})=\CsG}
is smooth, of course. Thus we obtain a mechanical interpretation
of \emph{all} differentiable singularities in any \cspace: namely,
they must occur at a kinematic singularity of type I for some
sub-mechanism \w{\Gamma_{n}} of $\Gamma$ \wh that is, a (smooth)
configuration \w{\Vp_{n}\in\Cs(\Gamma_{n})} at which the work map
\w{\psi_{n}:\Cs(\Gamma_{n})\to\RR{d}} is not a submersion (see
\cite{GAngS}).

In fact, more than this is required, since at the same point $\Vc$
another sub-mechanism \wh namely, the open chain \w{\Lambda_{n}}
\wh must be aligned, and it must be ``co-aligned'' with
\w{\Vp_{n}} in the sense that together they are $S$-equivalent to
an aligned closed chain (see proof of Proposition \ref{pgensing}).
We call this situation a \emph{conjunction} of two kinematic
singularities.
\end{remark}

%
%
\section{Example: a triangular planar linkage}
\label{ctriang}

We now consider an explicit example, which shows how all singular
configurations of a certain type of planar linkage can be
identified, by making use of a non-trivial $S$-equivalence.

\begin{mysubsection}{Parallel polygonal linkages}
\label{sppm}
In \cite{SSBlaCP}, a certain class of mechanisms were studied, called
\emph{parallel polygonal linkages}. These consist of two polygonal
\emph{platforms}. The first is the \emph{fixed} platform, which is
equivalent to fixing in \w{\RR{d}} the initial point \w{\xjk{i}{0}}
of each of $k$ open chains (called \emph{branches})
\wb[,]{1\leq i\leq k} of lengths \w[,]{\nj{1},\dotsc,\nj{k}} respectively.
The terminal point \w{\xjk{i}{\nj{i}}} of the $i$-th branch is
attached to the $i$-th vertex of a rigid planar $k$-polygon $\Pc$,
called the \emph{moving} platform.
See Figure \ref{fpentagon}.

\begin{figure}[htb]
\epsfysize=3cm
\leavevmode \epsffile{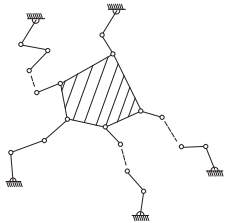}
\caption{A pentagonal planar mechanism}
\label{fpentagon}
\end{figure}

In the planar case, it was shown in \cite[Proposition 2.4]{SSBlaCP}
that a \emph{necessary} condition for a configuration $\Vc$ of such a
linkage $\Gamma$ to be singular is that one of the following holds:
\begin{enumerate}
\renewcommand{\labelenumi}{(\alph{enumi})~}
\item Two of its branch configurations \w{\Vj{i_{1}}} and
\w{\Vj{i_{2}}} are aligned, with coinciding direction lines
\w[.]{\Line(\xjk{i_{1}}{0},\xjk{i_{1}}{\nj{i_{1}}})=
\Line(\xjk{i_{2}}{0}\xjk{i_{2}}{\nj{i_{2}}})}
\item Three of its branch configurations are aligned, with direction lines
in the same plane meeting in a single point $P$ (see Figure \ref{ftriangle}).
\end{enumerate}

\begin{figure}[htb]
\epsfysize=5cm
\leavevmode \epsffile{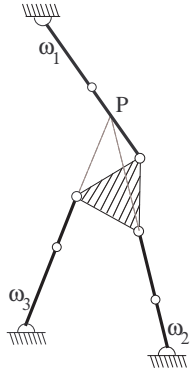}
\caption{Singular configuration of type (b)}
\label{ftriangle}
\end{figure}

For simplicity we assume that \w[,]{k=3} so the two platforms are
triangular. 
\end{mysubsection}

\begin{remark}\label{rtypea}
In the type (a) singularity there is obviously a
sub-mechanism \w{\Gp} which is isomorphic to an aligned closed
chain, so the corresponding configuration \w{\Vp} is singular.
Evidently, the caveat exemplified in \S \ref{egsing} does not
apply here, so in fact $\Vc$ is singular in \w[.]{\CsG}
\end{remark}

\begin{mysubsection}{A sub-mechanism and its equivalent open chain}
\label{sseoc}
We shall now show that the same holds (generically) for type (b),
using the approach of Section \ref{cpbc}.

Consider the sub-mechanism \w{\Gp} of $\Gamma$ obtained by
omitting the third branch (but retaining the fixed platform), with
base point at \w{\bx:=\xjk{3}{0}} (the fixed endpoint of the
omitted branch), and work point at \w{\by:=\xjk{3}{\nj{3}}} (the
moving endpoint of this branch). Let \w{\Vp} be the configuration
of \w{\Gp} corresponding to $\Vc$ of case (b) above (so in
particular the remaining two branches are aligned).

Assume that the first branch has \w{n:=\nj{1}} links, and the second
has \w{n':=\nj{2}} links. We may then choose ``internal''
parameters \w{(\phi_{1},\dotsc,\phi_{n})} for the first branch,
and \w{(\rho_{1},\dotsc,\rho_{n'})} for the second branch (as in
the proof of Proposition \ref{pzero}). We can then express the
lengths \w{\ell=\|\xjk{1}{0}\xjk{1}{n}\|} and
\w{m=\|\xjk{2}{0}\xjk{2}{n'}\|} as functions of
\w{(\phi_{1},\dotsc,\phi_{n})}  and
\w[,]{(\rho_{1},\dotsc,\rho_{n'})} respectively. Note that \w{\Gp}
has \w{n+n'+1} degrees of freedom, so one additional parameter is
needed. Two obvious choices are one of the ``base angles''
\w{\phi=\angle(\xjk{1}{n}\xjk{1}{0}\xjk{2}{0})} or
\w{\rho=\angle(\xjk{2}{n'}\xjk{2}{0}\xjk{1}{0})} for the two
branches (see Figure \ref{ftrisub}).

\begin{figure}[htb]
\epsfysize=5cm \leavevmode \epsffile{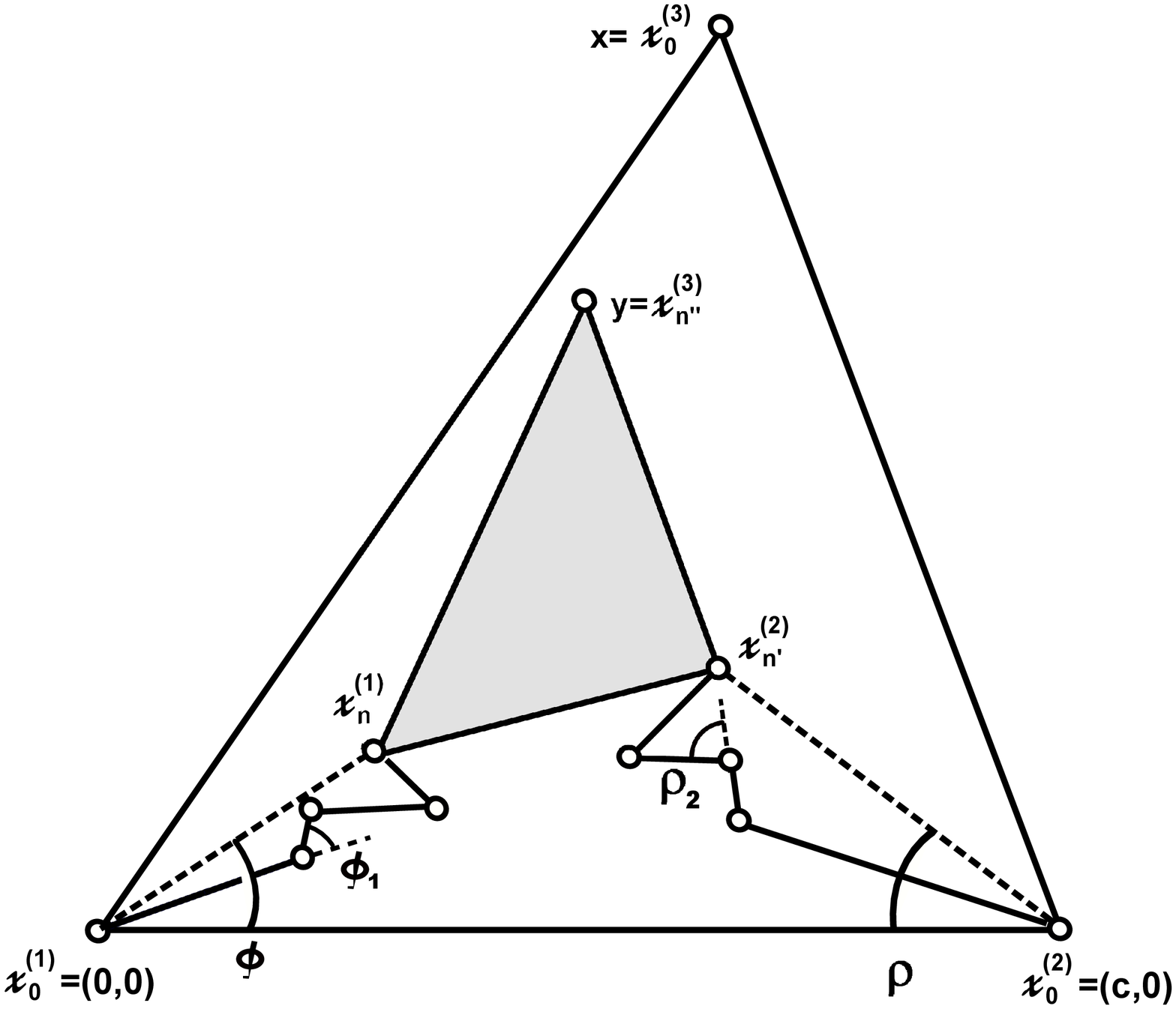} \caption{The
sub-mechanism \ $\Gp$} \label{ftrisub}
\end{figure}

However, for our purposes we shall need a different parameter, defined
as follows:

Let $\bz$ be the meeting point of the direction lines
\w{\Line(\xjk{1}{0}\xjk{1}{n})} and \w{\Line(\xjk{2}{0}\xjk{2}{n'})}
for the two branches (this is the point $P$ of Figure \ref{ftriangle}).
As our additional parameter we take the angle $\theta$ between the
direction line \w{\Line(\bx,\by)} for the (missing) third branch and
the line \w{\Line(\by,\bz)} (see Figure \ref{ftheta}). Note that
\w{\theta=0} or $\pi$ in our special configuration \w[.]{\Vp}
Letting \w[,]{N:=n+n'+1} the standard parametrization for the
open $N$-chain \w{\Gnl{N}} defines a (local) diffeomorphism
\w[.]{F:\Cs(\Gp)\to\Cs(\Gnl{N})}

\begin{figure}[htb]
\epsfysize=5cm \leavevmode \epsffile{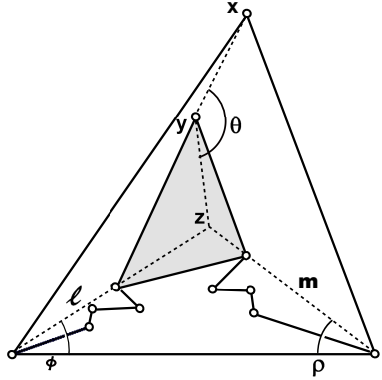} \caption{The
parameters $\theta$, $m$, and $\ell$ for the sub-mechanism \
$\Gp$} \label{ftheta}
\end{figure}

In order to show that $F$ is a work-equivalent at \w{\Vp} to
an aligned configuration \w{\Vj{N}} of \w{\Gnl{N}} (Definition \ref{dlocew}),
we must show that \w{\Vp} is a generic singularity for \w{\Gp} \wh
that is, that the reduced work map \w{\tpsi:\hCs(\Gp)\to\RR{}} has an
(isolated) singularity at \w[,]{\Vp} where $\tpsi$ assigns to any
configuration $\Vc$ of \w{\Gp} the length \w[.]{\tpsi(\Vc)=\|\bx,\by\|}

It is difficult to write $\tpsi$ explicitly as a function of
$\theta$: for this purpose it is simpler to use $\phi$ or $\rho$ as
above. However, if we fix the lengths
\w{\ell=\ell(\phi_{1},\dotsc,\phi_{n})} and
\w{m=m(\rho_{1},\dotsc,\rho_{n'})} of the direction vectors for the
two branches, the resulting linkage \w{\tGp} is a planar closed
$4$-chain with one degree of freedom (parameterized by $\phi$, say),
and the third vertex $\by$ of the moving triangle traces out a curve
in \w[,]{\RR{2}} called the \emph{coupler curve} for \w{\tGp} (cf.\
\cite[Ch.\ 4]{HallK}). Therefore, the infinitesimal effect of a change
in $\phi$ is the rotation of $\by$ about the point $\bz$ described
above (called the instantaneous point of rotation for \w[).]{\tGp} In
particular, the angle $\theta$ also changes, so we deduce that
\w{\ddp\theta/\ddp\phi\neq 0} at the aligned configuration
\w[.]{\Vp} This allows us to investigate the vanishing of
\w{\ddp\tpsi/\ddp\phi} instead of \w[.]{\ddp\tpsi/\ddp\theta}

This is the point where we are assuming genericity of \w[:]{\Vp} it
might happen that the coupler curve is singular precisely at this
point, in which case \w{\ddp\theta/\ddp\phi} may vanish, so we are no
longer guaranteed that $\theta$ is a suitable local parameter.  But
such instances of case (b) are not generic.

Since in the reduced \cspace\ \w{\hCs(\Gp)} we do not allow
rotation of \w{\Gp} about the base-point \w[,]{\xs=(x_{0},y_{0})}
we may assume that \w{\xjk{1}{0}=(0,0)} and
\w[.]{\xjk{2}{0}=(c,0)} Write \w{a:=\|\xjk{1}{n}\xjk{2}{n'}\|} and
\w{b:=\|\xjk{1}{n}\by\|} for the (fixed) sides of the moving
triangle (with fixed angle $\gamma$ between them), as in Figure
\ref{fparsub}.

\begin{figure}[htb]
\epsfysize=4cm \leavevmode \epsffile{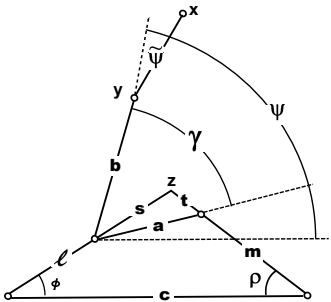}
\caption{Angles and lengths in the sub-mechanism \ $\Gp$}
\label{fparsub}
\end{figure}

We find that the following identities hold:
\begin{equation*}
\begin{split}
(\ell+s)\cos\phi~=&~c-(m+t)\cos\rho\\
(\ell+s)\sin\phi~=&~(m+t)\sin\rho\\
a\cos(\psi-\gamma)~=&~c-m\cos\rho-\ell\cos\phi\\
a\sin(\psi-\gamma)~=&~m\sin\rho-\ell\sin\phi
\end{split}
\end{equation*}
\noindent (where $\psi$ is the angle between side $b$ and the
$x$-axis), and:
\begin{equation*}
\begin{split}
a^{2}~=&~(c-m\cos\rho-\ell\cos\phi)^{2}+(m\sin\rho-\ell\sin\phi)^{2}\\
\tpsi^{2}~=&~(\ell\cos\phi+b\cos\psi-x_{0})^{2}+
   (\ell\sin\phi+b\sin\psi-y_{0})^{2}~.
\end{split}
\end{equation*}
\noindent After differentiating we find:
$$
\frac{\dd(\cos\rho)}{\ddp\phi}~=~\frac{\ell t}{ms}\sin\rho\hsp\text{and}\hsp
\frac{\dd(\cos\psi)}{\ddp\phi}~=~-\frac{\ell}{s}\sin\psi~,
$$
\noindent and we deduce that \w{\ddp\tpsi/\ddp\phi} vanishes if
and only if:
$$
b\ell\sin(\phi-\psi)+bs\sin(\psi-\phi)+(s\cos\phi,s\sin\phi)\cdot(-y_{0},x_{0})
+(b\cos\psi,b\sin\psi)\cdot(-y_{0},x_{0})=0.
$$
\noindent This formula expresses the fact that the area of the
triangle \w{\triangle \xjk{1}{0}\bz\bx} is the sum of the areas of
the quadrangle \w{\xjk{1}{0}\xjk{1}{n}\by\bx} and
\w[,]{\triangle\xjk{1}{n}\bz\by} which holds if and only if
\w{\xjk{1}{n}\by\bx} are aligned. From the formulas for
\w{\ell=\ell(\phi_{1},\dotsc,\phi_{n})} and
\w{m=m(\rho_{1},\dotsc,\rho_{n'})} we see that
\w{\ddp\tpsi/\ddp\phi_{i}} and \w{\ddp\tpsi/\ddp\rho_{j}} all
vanish at \w{\Vp} (as for any aligned open chain), so in case (b)
\w[,]{\nabla\tpsi=0} taken with respect to
\w[.]{(\theta,\phi_{1},\dotsc,\phi_{n},\rho_{1},\dotsc,\rho_{n'})}
Since all but one of the parameters are the standard internal
angles for open chains, we can check that the Morse indices for
the reduced work maps of \w{\Gp} and \w{\Gnl{N}} match up at
\w{\Vp} and \w[,]{F(\Vp)} showing that $F$ is indeed a
work-equivalence (see proof of Proposition \ref{pgensin}). 
Thus we may apply Proposition \ref{pgensing} to deduce that \w{\Vp} is a
cone singularity. 
\end{mysubsection}

\begin{summary}\label{sexample}
Since the caveat of \S \ref{egsing} does not apply to case (b), either 
(cf.\ \S \ref{rtypea}), we have shown that for a generic triangular
planar linkage $\Gamma$, any configuration \w{\Vc\in\CsG} satisfying
one of the \emph{necessary} conditions (a) and (b) of 
\cite[Proposition 2.4]{SSBlaCP} is ($S$-equivalent to) a generically
non-transverse configuration (Definition \ref{dgensing}). By 
Theorem \ref{tgensing} we can therefore deduce that it is indeed a
topological singularity \wh that is, conditions (a) and (b) are also 
\emph{sufficient}.

See \cite[Figure 8]{BBSShvaT} for an illustration of such a cone
singularity in a numerical example.
\end{summary}

%
%


\begin{thebibliography}{ABCD}
%
\bibitem[C]{CapoG}
L.~Caporaso,
``Geometry of tropical moduli spaces and linkage of graphs'',\hsm
preprint, 2010, {\tt arXiv:1001.2815}.
%
\bibitem[CDR]{CDRoteS}
R.~Connelly, E.D.~Demaine, \& G.~Rote,
``Straightening polygonal arcs and convexifying polygonal cycles'',
\textit{Discrete Comput. Geom.} \textbf{30} (2003), pp.~205-239.
%
\bibitem[F]{FarbT}
M.S.~Farber,
\textit{Invitation to Topological Robotics},\hsm
European Mathematical Society, Zurich, 2008.
%
\bibitem[FTY]{FTYuzvT}
M.S.~Farber, S.~Tabachnikov, \& S.A.~Yuzvinski{\u{\i}},
``Topological robotics: motion planning in projective spaces'',\hsm
\textit{Int.\ Math.\ Res.\ Notices} \textbf{34} (2003), 1853-1870.
%
\bibitem[GA]{GAngS}
C.~Gosselin \& J.~Angeles,
``Singularity analysis of Closed Loop Kinematic Chains'',\hsm
\textit{IEEE Trans.\ on Robotics \& Automation} \textbf{6} (1990), pp.~281-290.
%
\bibitem[G]{GottT}
D.H.~Gottlieb,
``Topology and the robot arm'',\hsm
\textit{Acta Applied Math.} \textbf{11} (1988), pp.~117-121.
%
\bibitem[Hal]{HallK}
A.S.~Hall, Jr.,
\textit{Kinematics and linkage design},\hsm
Prentice-Hall, Englewood Cliffs, NJ, 1961.
%
\bibitem[Hau]{HausT}
J.-C.~Hausmann,
``Sur la topologie des bras articul{\'{e}}s'',\hsm
in S.~Jackowski, R.~Oliver, \& K.~Pawa{\l}owski, eds.,
\textit{Algebraic Topology - Pozn{\'{a}}n 1989}, Lect.\ Notes Math.\
\textbf{1474}, Springer Verlag, Berlin-\-New-York, 1991, pp.~146-159.
%
\bibitem[HK]{HKnutC}
J.-C.~Hausmann \& A.~Knutson,
``The cohomology ring of polygon spaces'',\hsm
\textit{Ann.~Inst.~Fourier (Grenoble)} \textbf{48} (1998), pp.~281-321.
%
\bibitem[Hi]{HirsD}
M.W.~Hirsch,
\textit{Differential Topology},\hsm
Springer-\-Verlag, Berlin-\-New York, 1976.
%
\bibitem[Ho]{HolcM}
M.~Holcomb,
``On the Moduli Space of Multipolygonal Linkages in the Plane'',\hsm
\textit{Topology \& Applic.} \textbf{154} (2007), pp.~124-143.
%
\bibitem[Hu]{HuntK}
K.H.~Hunt,
\textit{Kinematic geometry of mechanisms},\hsm
Oxford University Press, New York, 1978.
%
\bibitem[JS]{JSteiC}
D.~Jordan \& M.~Steiner,
``Compact surfaces as configuration spaces of mechanical linkages'',\hsm
\textit{Israel J.\ Math.} \textbf{122} (2001), pp.~175-187.
%
\bibitem[Ka]{KamiHA}
Y.~Kamiyama,
``The homology of the configuration space of a singular arachnoid mechanism'',\hsm
\textit{JP J.\ Geom.\ Topol.} \textbf{7} (2007), pp.~385--395.
%
\bibitem[KTe]{KTezuS}
Y.~Kamiyama \& M.~Tezuka,
``Symplectic volume of the moduli space of spatial polygons'',\hsm
\textit{J.\ Math.\ Kyoto U.} \textbf{39} (1999), pp.~557-575.
%
\bibitem[KTs]{KTsuC}
Y.~Kamiyama \& S.~Tsukuda,
``The configuration space of the $n$-arms machine in the Euclidean space'', \hsm
\textit{Topology \& Applic.} \textbf{154} (2007), pp.~1447-1464.
%
\bibitem[KM1]{KMillM}
M.~Kapovich \& J.~Millson,
``On Moduli Space of Polygons in the Euclidean Plane'',\hsm
\textit{J.\ Diff.\ Geom.} \textbf{42} (1995), pp.~430-464.
%
\bibitem[KM2]{KMillS}
M.~Kapovich \& J.~Millson,
``The symplectic geometry of polygons in Euclidean space'',\hsm 
\textit{J.\ Diff.\ Geom.} \textbf{44} (1996), pp.~479-513.
%
\bibitem[KM3]{KMillU}
M.~Kapovich \& J.~Millson,
``Universality theorems for configuration spaces of planar linkages'',\hsm
\textit{Topology} \textbf{41} (2002), pp.~1051-1107.
%
\bibitem[Ki]{KingP}
H.C.~King,
``Planar linkages and algebraic sets'',\hsm
\textit{Turkish J.\ Math.} \textbf{23} (1999), pp.~33-56.
%
\bibitem[L]{LeeS}
J.M.~Lee,
\textit{Introduction to smooth manifolds},\hsm
Springer-\-Verlag, Berlin-\-New York, 2003.
%
\bibitem[Ma]{MatsM}
Y.~Matsumoto, \textit{An Introduction to Morse Theory},\hsm
American Math. Society, Providence, RI, 2002.
%
\bibitem[Me]{MerlP}
J.~P.~Merlet,
\textit{Parallel Robots},\hsm
Kluwer Academic Publishers, Dordrecht, 2000.
%
\bibitem[MT]{MTrinG}
R.~J.~Milgram \& J.~C.~Trinkle,
''The Geometry of Configuration Spaces for Closed Chains in Two and
Three Dimensions'',\hsm
\textit{Homology, Homotopy \& Applic.} \textbf{6} (2004), pp.~237-267.
%
\bibitem[OH]{OHaraM}
J.~O'Hara,
``The configuration space of planar spidery linkages'',\hsm
\textit{Topology \& Applic.} \textbf{154} (2007), pp.~502-526.
%
\bibitem[Se]{SeliG}
J.M.~Selig,
\textit{Geometric Fundamentals of Robotics},\hsm
Springer-\-Verlag \textit{Mono.\ Comp.\ Sci.}, Berlin-\-New York, 2005.
%
\bibitem[Sh]{ShafB1}
I.R.~Shafarevich,
\textit{Basic algebraic geometry, 1. Varieties in projective space},\hsm
Springer-\-Verlag, Berlin-\-New York, 1994.
%
\bibitem[SSBB]{BBSShvaT}
N.~Shvalb, M.~Shoham, H.~Bamberger, \& D.~Blanc,
``Topological and Kinematic Singularities for a Class of Parallel Mechanisms'',\hsm
\textit{Math.\ Prob.\ in Eng.} \textbf{2009} (2009), Art.\ 249349,
pp.~1-12.
%
\bibitem[SSB1]{SSBlaCA}
N.~Shvalb, M.~Shoham, \& D.~Blanc,
``The Configuration Space of Arachnoid Mechanisms'',\hsm
\textit{Fund.\ Math.} \textbf{17} (2005), pp.~1033-1042.
%
\bibitem[SSB2]{SSBlaCP}
N.~Shvalb, M.~Shoham and D.~Blanc,
``The configuration space of a parallel polygonal mechanism'',\hsm
\textit{JP J.\ Geom.\ Top.} \textbf{9} (2009), pp.~137-167.
%
\bibitem[T]{TsaiR}
L.~W.~Tsai,
\textit{Robot Analysis - The mechanics of serial and parallel manipulators},\hsm
Wiley interscience Publication - John Wiley \& Sons, New York, 1999.
%
\bibitem[ZFB]{ZFBenS}
D.S.~Zlatanov, R.G.~Felton, \& B.~Benhabib,
``Singularity analysis of Mechanisms and Robots via a
Motion--Space Model of the Instantaneous Kinematics'',
\textit{Proceedings IEEE Conf.~in Robotics \& Automation} (1994),
pp.~980-985.
%
\bibitem[ZBG]{ZBGossC}
D.S.~Zlatanov, I.A.~Bonev, \& C.M.~Gosselin, 
``Constraint Singularities as C-Space Singularities,'',\hsm
\textit{8th International Symposium on Advances in Robot Kinematics (ARK 2002)}, 
Caldes de Malavella, Spain, 2002.
%
\end{thebibliography}
\end{document}